\numberwithin{equation}{section}
\declaretheorem[name=Theorem, sibling=equation]{theorem}
\declaretheorem[name=Lemma, sibling=equation]{lemma}
\declaretheorem[name=Proposition, sibling=equation]{proposition}
\declaretheorem[name=Corollary, sibling=equation]{corollary}
\declaretheorem[name=Definition, style=definition, sibling=equation]{definition}
\declaretheorem[name=Remark, style=definition, sibling=equation]{remark}
\declaretheorem[name=Example, style=definition, sibling=equation]{example}
\DeclareMathOperator{\reg}{reg}
\DeclareMathOperator{\Mod}{-Mod}
\DeclareMathOperator{\HH}{H}
\DeclareMathOperator{\Ob}{Ob}
\DeclareMathOperator{\id}{id}
\newcommand{\C}{{\mathscr{C}}}
\newcommand{\A}{{\mathscr{A}}}
\newcommand{\B}{{\mathscr{B}}}
\newcommand{\N}{{\mathbb{N}}}
\newcommand{\hor}{{\mathrm{hor}}}
\newcommand{\ver}{{\mathrm{ver}}}
\newcommand{\FI}{{\mathrm{FI}}}
\newcommand{\ZZ}{{\mathbb{Z}_{\geqslant -1}}}
\title{Bounding regularity of $\FI^m$-modules}
\author{Wee Liang Gan}
\author{Khoa Ta}
\begin{document}

\begin{abstract}
   Let $\FI$ be a skeleton of the category of finite sets and injective maps, and $\FI^m$ the product of $m$ copies of $\FI$. We prove that if an $\FI^m$-module is generated in degree $\leqslant d$ and related in degree $\leqslant r$, then its regularity is bounded above by a function of $m$, $d$, and $r$. 
\end{abstract}

\maketitle

Let $\N$ be the set of nonnegative integers. For each $n\in \N$, we write $[n]$ for the set $\{1, 2, \ldots, n\}$; in particular, $[0]$ denotes the empty set $\emptyset$.
Let $\FI$ be the category whose objects are the sets $[n]$ for $n=0, 1, 2, \ldots$ and whose morphisms are the injective maps between the objects. Let $\FI^m$ be the product of $m$ copies of the category $\FI$. 

It is a well-known result of Church and Ellenberg \cite{ce} that if $V$ is an $\FI$-module generated in degree $\leqslant d$ and related in degree $\leqslant r$, then the (Castelnuovo-Mumford) regularity of $V$ is bounded above by $d+r-1$. For $m>1$, Gan and Li \cite{gl_product} proved that $\FI^m$-modules presented in finite degrees have finite regularity but their proof does not lead to a bound. The main goal of our present article is to prove that if $V$ is an $\FI^m$-module generated in degree $\leqslant d$ and related in degree $\leqslant r$, then its regularity is bounded above by a function of $m$, $d$ and $r$. 

The proof of our result proceeds by nested induction: the outer induction is over $m$ and the inner induction is over $d$. There are two main ingredients in our argument: 
\begin{itemize}
   \item We define a pair of spectral sequences converging to the homology of a module over a product of two categories. These two spectral sequences exist not only for $\FI^m$-modules but in a general setting.
   \item We use a generalization to $\FI^m$-modules of the long exact sequence of Church \cite{church}.
\end{itemize}

Let us mention some related works. $\FI^m$-modules were studied by Casto \cite{casto}, Gadish \cite{gadish}, Gan and Li \cite{gl_product}, Li and Ramos \cite{lira}, Li and Yu \cite{liyu}, and Zeng \cites{zeng1, zeng2}. 

This article is organized as follows. In Section \ref{sec:main result}, we state our main result. In Section \ref{sec:sp seq}, we define certain functors associated to modules over a product of two categories and we construct spectral sequences converging to the homology of these modules. In Section \ref{sec:les}, we recall certain functors associated to $\FI^m$-modules and we construct a long exact sequence following Church \cite{church}. In Section \ref{sec:main proof}, we give the proof of our main result.

We are grateful to the referees for providing us with many suggestions to improve the exposition and the results of this article.

\section{Main result} \label{sec:main result}

\subsection{}
Throughout this article, we fix a commutative ring $k$. For any category $\C$, a $\C$-module is a functor from $\C$ to the category of $k$-modules. A homomorphism from a $\C$-module $U$ to a $\C$-module $V$ is a natural transformation from the functor $U$ to the functor $V$.

Let $\C$ be a small category. We write $\Ob(\C)$ for the set of objects of $\C$. For any $X, Y\in \Ob(\C)$, we write $\C(X, Y)$ for the set of morphisms in $\C$ from $X$ to $Y$. Let $V$ be a $\C$-module. For any $X\in \Ob(\C)$, we write $V_X$ for $V(X)$. For any $f\in \C(X, Y)$, we write $f_*$ for the map $V(f): V_X\to V_Y$. 

Denote by $\C\Mod$ the category of $\C$-modules. Recall that $\C\Mod$ is an abelian category. For each $W\in \Ob(\C)$, we define a $\C$-module $M^{\C}(W)$ as follows:
\begin{itemize}
\item 
for each $X\in \Ob(\C)$, let $M^{\C}(W)_X$ be the free $k$-module with basis $\C(W, X)$; 

\item 
for each $g\in \C(X, Y)$, let 
\[ g_* : M^{\C}(W)_X \to M^{\C}(W)_Y \] 
be the $k$-linear map sending each $f\in \C(W, X)$ to the composition $gf\in \C(W, Y)$. 
\end{itemize}
It is easy to see that $M^{\C}(W)$ is a projective $\C$-module. We call $M^{\C}(W)$ a \emph{principal projective} $\C$-module. 

We say that a $\C$-module is a \emph{free} $\C$-module if it is a direct sum of principal projective $\C$-modules. Every $\C$-module is a homomorphic image of a free $\C$-module, thus the abelian category $\C\Mod$ has enough projectives. Every projective $\C$-module is isomorphic to a direct summand of a free $\C$-module.

\subsection{}
Assume that $\C$ is a skeletal small category. Define a relation $\preceq$ on $\Ob(\C)$ by $X\preceq Y$ if $\C(X,Y)\neq \emptyset$. We write $X\prec Y$ if $X\preceq Y$ but not $Y\preceq X$. We say that $\C$ is \emph{directed} if the relation $\preceq$ on $\Ob(\C)$ is a partial order.

\begin{example}
Recall that $\C$ is an \emph{EI-category} if every endomorphism in $\C$ is an isomorphism. If $\C$ is a skeletal EI-category, then it is directed; see \cite{luck}*{Section 9}. 
\end{example}

Suppose now that $\C$ is directed. Let $V$ be a $\C$-module. 
For any $X\in\Ob(\C)$, define a $k$-submodule $\widetilde{V}_X$ of $V_X$ by
\[ \widetilde{V}_X =  \sum_{W\prec X} \left( \sum_{f\in \C(W,X)} f_*(V_W) \right). \]
The assignment $X\mapsto \widetilde{V}_X$ defines a $\C$-submodule $\widetilde{V}$ of $V$. Let 
\[ \HH^\C_0 : \C\Mod \to \C\Mod \] 
be the functor defined by $\HH^\C_0 (V) = V/\widetilde{V}$. Then $\HH^\C_0$ is a right exact functor and we can define its left derived functors. For each integer $i\geqslant 1$, let 
\[ \HH^\C_i : \C\Mod\to\C\Mod \] 
be the $i$-th left derived functor of $\HH^\C_0$. We call $\HH^\C_i(V)$ the $i$-th \emph{$\C$-homology} of $V$. 

\subsection{}
Fix an integer $m\geqslant 1$. The category $\FI^m$ is a directed skeletal small category. 

Let $V$ be an $\FI^m$-module. For each $\mathbf{n}=(n_1, \ldots, n_m)\in \N^m$, we set 
\begin{gather*} 
   |\mathbf{n}| = n_1+\cdots+n_m \in \N, \\
   [\mathbf{n}] = ([n_1], \ldots, [n_m]) \in \Ob(\FI^m).
\end{gather*}
We write $V_\mathbf{n}$ for $V_{[\mathbf{n}]}$. We define the \emph{degree} of $V$ by 
\[ \deg V = \begin{cases} 
               \sup \{ |\mathbf{n}| \mid V_\mathbf{n} \neq 0 \} & \mbox{ if } V\neq 0,\\
               -1 & \mbox{ if } V=0.
            \end{cases} \]
We say that $V$ has \emph{finite degree} if $\deg V < \infty$. For any $i\in \N$, let 
\[ t_i(V) = \deg \HH^{\FI^m}_i(V). \] 

Let 
\[ \ZZ = \{-1\} \cup \N. \]
We have $\deg V \in \ZZ \cup \{\infty\}$. 

For any $d\in\ZZ$, we say that $V$ is \emph{generated in degree $\leqslant d$} if $t_0(V) \leqslant d$. Equivalently, $V$ is generated in degree $\leqslant d$ iff there exists an epimorphism $P \to V$ where
\[ P = \bigoplus_{j\in J} M^{\FI^m}([\mathbf{n}_j]) \]
for some indexing set $J$ and each $\mathbf{n}_j\in \N^m$ satisfies $|\mathbf{n}_j| \leqslant d$.

For any $d, r\in \ZZ$, we say that $V$ is \emph{generated in degree $\leqslant d$} and \emph{related in degree $\leqslant r$} if there exists a short exact sequence
\[ 0 \to U \to P \to V \to 0 \]
such that: 
\begin{itemize}
   \item $P$ is a free $\FI^m$-module generated in degree $\leqslant d$,
   \item $U$ is an $\FI^m$-module generated in degree $\leqslant r$.
\end{itemize}
Observe that we can choose $P$ with $t_0(P)=t_0(V)$ and in this case we have 
\begin{equation} \label{eq:relation degree}
   t_1(V) \leqslant t_0(U) \leqslant \max\{ t_0(V), t_1(V) \}. 
\end{equation}

We define the \emph{regularity} $\reg(V)$ of $V$ by 
\[ \reg(V) = \sup \{ t_i(V)-i \mid i\geqslant 0\}. \]

\begin{remark}
   In \cite{ce}, the regularity of an $\FI$-module $V$ is defined as $\sup \{ t_i(V)-i \mid i\geqslant 1\}$. For our proofs below, it is more convenient to take the supremum over the range $i\geqslant 0$ so that $t_i(V)\leqslant i + \reg(V)$ for all $i\in\N$.
\end{remark}

\begin{remark}
   Our notion of regularity is distinct from the notion of Castelnuovo-Mumford regularity of $\FI^m$-modules defined by Li and Ramos in \cite{lira}. We do not know of any relation between these two notions.
\end{remark}

\subsection{}
We shall define for each integer $m\geqslant 1$ a function
\[ \rho_m : \ZZ \times \ZZ  \to \ZZ. \]
The precise definition of $\rho_m$ is not needed for understanding the statement of our main result, Theorem \ref{main}, below. 

\begin{definition} \label{def:rho}
   Let $m,d,r\in\mathbb{Z}$ with $m\geqslant 1$ and $d, r\geqslant -1$.

   If $m=1$, then let
   \[ \rho_m(d,r) = \max\{d, d+r-1\}. \]

   If $m\geqslant 2$ and $d=-1$, then let 
   \[ \rho_m(d,r) = -1. \]
   
   If $m\geqslant 2$ and $d\geqslant 0$, then let 
   \[ \rho_m(d,r) = \max\{ \rho_{m-1}(\rho'_m(d,r), \rho''_m(d,r)), 1+\rho_m(d-1,r) \} \]
   where 
   \begin{align*}
      \rho'_m(d,r) &= \max\{ 2+\rho_m(d-1, r), r \}  ,\\
      \rho''_m(d,r) &=  \max\{ 3+\rho_m(d-1, r), 4+ \rho_1(d,r) + \rho_{m-1}(d,r) \}.
   \end{align*}
\end{definition}

\begin{theorem} \label{main}
   Let $m, d, r\in \mathbb{Z}$ with $m\geqslant 1$ and $d, r\geqslant -1$. Let $V$ be an $\FI^m$-module generated in degree $\leqslant d$ and related in degree $\leqslant r$. Then
   \[ \reg(V) \leqslant \rho_m(d,r). \]  
\end{theorem}

The proof of Theorem \ref{main} will be given in Section \ref{sec:main proof}. We do not expect the bound in Theorem \ref{main} to be sharp.

The following corollary gives a bound on the regularity of $V$ in terms of $t_0(V)$ and $t_1(V)$. 

\begin{corollary} \label{main_corollary}
   Let $m\in \mathbb{Z}$ with $m\geqslant 1$. Let $V$ be an $\FI^m$-module. Assume that $t_0(V)<\infty$ and $t_1(V)<\infty$. Then 
   \[ \reg(V) \leqslant \rho_m(t_0(V), t_1(V)). \]
\end{corollary}

The proof of Corollary \ref{main_corollary} is in Subsection \ref{subsec:last}.

\section{Spectral sequences} \label{sec:sp seq}

\subsection{}
In this section, we let $\C$ be a product category $\A\times \B$, where $\A$ and $\B$ are directed skeletal small categories; in particular, $\C$ is a directed skeletal small category. 

Let $V$ be a $\C$-module. For any $(X,Y)\in \Ob(\C)$, define $k$-submodules $V^\hor_{(X,Y)}$ and $V^\ver_{(X,Y)}$ of $V_{(X,Y)}$ by
\begin{gather*}
V^\hor_{(X,Y)} =  \sum_{W\prec X} \left( \sum_{f\in \A(W,X)} (f,\id_Y)_*\left(V_{(W,Y)}\right) \right),\\
V^\ver_{(X,Y)} =  \sum_{Z\prec Y} \left( \sum_{g\in \B(Z,Y)} (\id_X,g)_*\left(V_{(X,Z)}\right) \right).
\end{gather*}

\begin{lemma}
\begin{enumerate}[(i)]
\item
The assignment $(X,Y)\mapsto V^\hor_{(X,Y)}$ defines a $\C$-submodule $V^\hor$ of $V$.

\item 
The assignment $(X,Y)\mapsto V^\ver_{(X,Y)}$ defines a $\C$-submodule $V^\ver$ of $V$.

\item
One has: $\widetilde{V} = V^\hor + V^\ver$.
\end{enumerate}
\end{lemma}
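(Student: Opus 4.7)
The plan is to verify (i) and (ii) by a direct check that the relevant $k$-submodules are stable under all morphisms of $\C = \A \times \B$, and then to prove (iii) by comparing the two definitions object-by-object, using the factorization of a morphism in $\C$ into a ``horizontal'' and a ``vertical'' piece.

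For (i), I would note that any morphism in $\C$ from $(X,Y)$ to $(X',Y')$ has the form $(h,k)$ with $h\in \A(X,X')$, $k\in\B(Y,Y')$, and factors as $(h,\id_{Y'})\circ(\id_X,k)$. Given a generator $(f,\id_Y)_*(v)$ of $V^{\hor}_{(X,Y)}$ with $W\prec X$, $f\in \A(W,X)$, and $v\in V_{(W,Y)}$, functoriality yields
\[
(h,k)_*(f,\id_Y)_*(v) = (hf,\id_{Y'})_*\bigl((\id_W,k)_*(v)\bigr).
\]
Here $hf\in \A(W,X')$ and $(\id_W,k)_*(v)\in V_{(W,Y')}$. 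Because $\A$ is directed and $W\preceq X\preceq X'$, antisymmetry forces $W\prec X'$ (otherwise $X'\preceq W$ would imply $X\preceq W$, contradicting $W\prec X$). Hence the element lies in $V^{\hor}_{(X',Y')}$. This shows $V^{\hor}$ is a $\C$-submodule; part (ii) is entirely symmetric.

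For (iii), I would unwind $\widetilde{V}_{(X,Y)}$ using the product structure. A pair $(W,Z)\prec(X,Y)$ in $\C$ means $W\preceq X$, $Z\preceq Y$, with at least one strict. Any $(f,g)\in\C((W,Z),(X,Y))$ factors as $(f,\id_Y)\circ(\id_W,g)$ or as $(\id_X,g)\circ(f,\id_Z)$. If $W\prec X$, applying the first factorization places $(f,g)_*(V_{(W,Z)})$ inside $V^{\hor}_{(X,Y)}$; if $Z\prec Y$, applying the second places it inside $V^{\ver}_{(X,Y)}$. This gives $\widetilde{V}\subseteq V^{\hor}+V^{\ver}$. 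The reverse inclusion is immediate since each summand of $V^{\hor}_{(X,Y)}$ corresponds to a pair $(W,Y)\prec(X,Y)$ (and likewise for $V^{\ver}$).

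The argument is essentially bookkeeping; the only point that requires care is the use of antisymmetry of $\preceq$ to propagate strict inequality under composition, which is what makes the directedness hypothesis essential. I expect no serious obstacle, so the proof will be short.
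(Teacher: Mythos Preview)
Your proposal is correct and follows essentially the same approach as the paper: both verify (i) by composing $(h,k)(f,\id_Y)=(hf,\id_{Y'})(\id_W,k)$ and using directedness to get $W\prec X'$, and prove (iii) via the same case split on whether $W\prec X$ or $Z\prec Y$, using the horizontal/vertical factorization of $(f,g)$. Your explicit justification of $W\prec X'$ via antisymmetry is a bit more detailed than the paper's one-line assertion, but the argument is the same.
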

\begin{proof}
(i) Let $f\in \A(W, X)$ and $(g,h)\in \C((X,Y), (X',Y'))$. Then
\[ (g,h)(f,\id_Y) = (gf, h) = (gf, \id_{Y'})(\id_W, h), \]
which implies 
\[ (g,h)_* \left( (f,\id_Y)_*\left(V_{(W,Y)}\right) \right) \subseteq (gf, \id_{Y'})_* \left( V_{(W, Y')} \right). \]
Moreover, $W\prec X$ implies $W\prec X'$. Therefore $(g,h)_* \left(V^\hor_{(X,Y)} \right) \subseteq V^\hor_{(X',Y')}$.

(ii) Similar to (i).

(iii) It is clear that $V^\hor + V^\ver \subseteq \widetilde{V}$.

Now suppose $(f, g)\in \C((W,Z), (X,Y))$ where $(W,Z)\prec (X,Y)$. Then $W\prec X$ or $Z\prec Y$. 

If $W\prec X$, then 
\[ (f, g)_* \left( V_{(W,Z)} \right) \subseteq (f, \id_Y)_* \left( V_{(W, Y)} \right) \subseteq V^\hor_{(X,Y)}. \]
If $Z\prec Y$, then
\[ (f, g)_* \left( V_{(W,Z)} \right) \subseteq (\id_X, g)_* \left( V_{(X, Z)} \right) \subseteq V^\ver_{(X,Y)}. \]
Hence $\widetilde{V} \subseteq V^\hor + V^\ver$.
\end{proof}

By the preceding lemma, we may define functors 
\begin{gather*} 
\HH^\hor_0 : \C\Mod \to \C\Mod, \qquad V\mapsto V/V^\hor;\\
\HH^\ver_0 : \C\Mod \to \C\Mod , \qquad V\mapsto V/V^\ver;
\end{gather*}
moreover, there are canonical isomorphisms
\begin{equation} \label{vhor and vver}
\HH^\ver_0 (\HH^\hor_0 (V)) \cong \HH^\C_0(V) \cong \HH^\hor_0 (\HH^\ver_0 (V)). 
\end{equation}
The functors $\HH^\hor_0$ and $\HH^\ver_0$ are right exact and we can define their left derived functors.
For each integer $i\geqslant 1$, let 
\[ \HH^\hor_i : \C\Mod\to\C\Mod \] 
be the $i$-th left derived functor of $\HH^\hor_0$, and let 
\[ \HH^\ver_i : \C\Mod\to\C\Mod \] 
be the $i$-th left derived functor of $\HH^\ver_0$. We call $\HH^\hor_i(V)$ the $i$-th \emph{horizontal homology} of $V$, and $\HH^\ver_i(V)$ the $i$-th \emph{vertical homology} of $V$.

\subsection{}
For each $X\in \Ob(\A)$, we have an inclusion functor $\B \to \C$ defined on objects by $Y\mapsto (X,Y)$ and on morphisms by $g\mapsto (\id_X, g)$. Thus we obtain a restriction functor 
\[ \C\Mod \to \B\Mod, \quad V\mapsto V_{(X,-)} \]
where
\[ (V_{(X,-)})_Y = V_{(X,Y)} \quad\mbox{ for all }Y\in \Ob(\B). \]

Similarly, if we fix $Y\in \Ob(\B)$, we have a restriction functor 
\[ \C\Mod \to \A\Mod, \quad V \mapsto V_{(-,Y)} \]
where
\[ (V_{(-,Y)})_X = V_{(X,Y)} \quad\mbox{ for all }X\in \Ob(\A). \]

\begin{lemma} \label{restriction of free modules}
   Let $(W,Z)\in \Ob(\C)$. 
   \begin{enumerate}[(i)]
      \item For each $X\in \Ob(\A)$, we have 
      \[  M^{\C}(W,Z)_{(X,-)} \cong \bigoplus_{f\in \A(W,X)} M^{\B}(Z). \]
      \item For each $Y\in \Ob(\B)$, we have 
      \[  M^{\C}(W,Z)_{(-,Y)} \cong \bigoplus_{g\in \B(Z,Y)} M^{\A}(W). \]
   \end{enumerate}
\end{lemma}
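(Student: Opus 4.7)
The plan is a direct unpacking of definitions; I will treat (i), and (ii) will follow by a symmetric argument.

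First, I would unpack the principal projective. By definition, $M^{\C}(W,Z)_{(X,Y)}$ is the free $k$-module on the set $\C((W,Z),(X,Y))$, and since $\C = \A\times\B$, this hom-set is canonically identified with $\A(W,X)\times\B(Z,Y)$. Partitioning the product by the first coordinate and passing to free $k$-modules gives a canonical $k$-linear isomorphism
\[ M^{\C}(W,Z)_{(X,Y)} \;\cong\; \bigoplus_{f\in\A(W,X)} M^{\B}(Z)_Y \]
for each $Y\in\Ob(\B)$, where a basis element $(f,g)\in\A(W,X)\times\B(Z,Y)$ on the left corresponds to $g\in\B(Z,Y)$ in the summand indexed by $f$ on the right.

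Next, I would verify that, as $Y$ varies, these pointwise isomorphisms assemble into an isomorphism of $\B$-modules. The $\B$-module structure on $M^{\C}(W,Z)_{(X,-)}$ is given, for $h\in\B(Y,Y')$, by the map $(\id_X,h)_*$, which sends a basis element $(f,g)$ to $(\id_X,h)\circ(f,g) = (f,hg)$. Under the decomposition above, this is precisely the summand-wise action of $h_*$ on $M^{\B}(Z)$, with the indexing set $\A(W,X)$ kept fixed. Hence the isomorphism is natural in $Y$, giving the desired isomorphism of $\B$-modules. Part (ii) follows by the same argument, partitioning $\A(W,X)\times\B(Z,Y)$ by its second coordinate and using that the restriction functor $V\mapsto V_{(-,Y)}$ picks out the morphisms of the form $(f,\id_Y)$, which act on basis elements by $(f',g)\mapsto(ff',g)$.

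There is no real obstacle here; the lemma is essentially a bookkeeping exercise. The only point that warrants explicit mention is that the restriction functor $\C\Mod\to\B\Mod$ uses only morphisms $(\id_X,h)$, so the first coordinate is inert and the direct-sum indexing set $\A(W,X)$ does not interact with the $\B$-action — this is exactly what makes each summand a copy of $M^{\B}(Z)$ rather than something more complicated.
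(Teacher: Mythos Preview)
Your proof is correct and follows essentially the same approach as the paper: both define the isomorphism by sending a basis element $(f,g)$ to the element $g$ in the summand indexed by $f$, and both treat (ii) by symmetry. If anything, your write-up is more explicit than the paper's, which simply asserts that the map is an isomorphism without spelling out the naturality check.
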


\begin{proof} 
   (i) We define a homomorphism 
   \[ \phi: M^{\C}(W,Z)_{(X,-)} \to \bigoplus_{f\in \A(W,X)} M^{\B}(Z), \]
   as follows: for each $Y\in \Ob(\B)$, let 
   \[ \phi_Y : M^{\C}(W,Z)_{(X,Y)} \to \bigoplus_{f\in \A(W,X)} M^{\B}(Z)_Y. \]
   be the $k$-linear map sending the element $(f,g)\in \C((W,Z), (X,Y))$ to the element $g\in \B(Z,Y)$ in the direct summand indexed by $f\in \A(W,X)$. It is easy to see that $\phi$ is an isomorphism.

   (ii) Similar to (i).
\end{proof}

\begin{lemma} \label{homology_and_restrict}
   Let $V$ be a $\C$-module. 
   \begin{enumerate}[(i)]
      \item Let $X\in \Ob(\A)$. For each $i\geqslant 0$, we have
      \[ (\HH^{\ver}_i(V))_{(X,-)} \cong \HH^{\B}_i(V_{(X,-)}). \] 
      \item Let $Y\in \Ob(\B)$. For each $i\geqslant 0$, we have
      \[ (\HH^{\hor}_i(V))_{(-,Y)} \cong \HH^{\A}_i(V_{(-,Y)}). \] 
   \end{enumerate}
\end{lemma}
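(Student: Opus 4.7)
The plan is to establish (i); statement (ii) then follows by the symmetric argument swapping the roles of $\A$ and $\B$. Fix $X\in\Ob(\A)$ and write $R_X : \C\Mod \to \B\Mod$ for the restriction functor $V\mapsto V_{(X,-)}$.

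First I would handle the case $i=0$ directly from the definitions. For each $Y\in\Ob(\B)$, the definition of $V^\ver$ gives
\[ V^\ver_{(X,Y)} = \sum_{Z\prec Y} \Bigl( \sum_{g\in\B(Z,Y)} (\id_X,g)_*\bigl(V_{(X,Z)}\bigr)\Bigr), \]
which is precisely the $k$-submodule $\widetilde{(V_{(X,-)})}_Y$ computed inside the $\B$-module $V_{(X,-)}$. Consequently $R_X(\HH^\ver_0(V)) \cong \HH^\B_0(R_X(V))$ as $\B$-modules.

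Next I would record the two structural facts that upgrade this to all $i$. The restriction $R_X$ is exact, since kernels, cokernels, and direct sums in $\C\Mod$ and $\B\Mod$ are computed objectwise. Moreover, by Lemma \ref{restriction of free modules}(i), $R_X$ sends every principal projective $M^\C(W,Z)$ to a free $\B$-module, hence to a projective $\B$-module; since $R_X$ preserves arbitrary direct sums, it carries free $\C$-modules to free $\B$-modules and (by passage to summands) projective $\C$-modules to projective $\B$-modules.

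Finally I would compute derived functors via resolutions. Choose a free resolution $P_\bullet \to V$ in $\C\Mod$. Exactness of $R_X$ gives an exact sequence $R_X(P_\bullet)\to R_X(V)$, and by the preceding paragraph $R_X(P_\bullet)$ is a projective resolution of $R_X(V)$ in $\B\Mod$. Therefore
\[ \HH^\B_i(R_X(V)) \cong H_i\bigl(\HH^\B_0(R_X(P_\bullet))\bigr) \cong H_i\bigl(R_X(\HH^\ver_0(P_\bullet))\bigr) \cong R_X\bigl(H_i(\HH^\ver_0(P_\bullet))\bigr) \cong R_X(\HH^\ver_i(V)), \]
where the second isomorphism is the $i=0$ case applied termwise and the third uses exactness of $R_X$ once more. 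The whole argument is routine once the two ingredients (exactness of $R_X$ and its preservation of free modules) are in place; the only mild subtlety is to verify the $i=0$ identification as a $\B$-module, not merely objectwise, which is why I would write it out directly from the definitions rather than try to deduce it abstractly.
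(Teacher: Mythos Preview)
Your argument is correct and follows essentially the same approach as the paper: the paper's proof simply asserts that the case $i=0$ is obvious and that the case $i>0$ follows because the restriction functor is exact and, by Lemma~\ref{restriction of free modules}, sends free $\C$-modules to free $\B$-modules. You have filled in the details of both of these steps explicitly, but there is no substantive difference in method.
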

\begin{proof}
   (i) The case $i=0$ is obvious. The case $i>0$ follows because the restriction functor is exact and, by Lemma \ref{restriction of free modules}, the restriction of a free $\C$-module is a free $\B$-module.
   
   (ii) Similar to (i).
\end{proof}

\subsection{}
The spectral sequences in the following theorem are special cases of the Grothendieck spectral sequence associated to the composition of two functors.

\begin{theorem} \label{grothendieck}
Let $V$ be a $\C$-module. Then there are two convergent first-quadrant spectral sequences:
\begin{gather*}
^{I}\!E^2_{pq} = \HH^\ver_p (\HH^\hor_q (V)) \Rightarrow \HH^{\C}_{p+q}(V),\\
^{II}\!E^2_{pq} = \HH^\hor_p (\HH^\ver_q (V)) \Rightarrow \HH^{\C}_{p+q}(V).
\end{gather*}
\end{theorem}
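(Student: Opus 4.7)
The plan is to realize both spectral sequences as instances of the Grothendieck spectral sequence for the composition of two right exact functors. By \eqref{vhor and vver}, the composition $\HH^{\ver}_0 \circ \HH^{\hor}_0$ agrees with $\HH^{\C}_0$, so applying the Grothendieck spectral sequence for left derived functors will yield the first spectral sequence, provided we can check that $\HH^{\hor}_0$ sends every projective $\C$-module to an $\HH^{\ver}_0$-acyclic module. The second spectral sequence then follows from the symmetric argument using $\HH^{\hor}_0 \circ \HH^{\ver}_0 \cong \HH^{\C}_0$.

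Since the functors are additive and every projective $\C$-module is a direct summand of a free one, it suffices to check the acyclicity condition for a single principal projective $P = M^{\C}(W,Z)$. Concretely, we need $\HH^{\ver}_i(\HH^{\hor}_0(P)) = 0$ for every $i \geqslant 1$, and by Lemma \ref{homology_and_restrict}(i) this is equivalent to showing that the $\B$-module $(\HH^{\hor}_0(P))_{(X,-)}$ is $\HH^{\B}_0$-acyclic for every $X \in \Ob(\A)$.

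For this, one unwinds the definitions. At $(X,Y) \in \Ob(\C)$ we have $P_{(X,Y)} = k\A(W,X) \otimes_k k\B(Z,Y)$, and the submodule $P^{\hor}_{(X,Y)}$ is spanned by those pairs $(\phi, g)$ for which $\phi \in \A(W,X)$ factors through some $W' \prec X$. Denoting this subset by $U_X \subseteq \A(W,X)$, we obtain $P^{\hor}_{(X,Y)} = kU_X \otimes_k k\B(Z,Y)$, and the $\B$-action on $P_{(X,-)}$ affects only the second tensor factor. Consequently $(\HH^{\hor}_0(P))_{(X,-)}$ is isomorphic as a $\B$-module to $\bigoplus_{\phi \in \A(W,X) \setminus U_X} M^{\B}(Z)$, a free $\B$-module, and hence $\HH^{\B}_0$-acyclic.

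The main hurdle is this explicit verification on principal projectives; once it is in place, the Grothendieck machinery assembles the first spectral sequence directly, and the second is obtained by interchanging the roles of $\A$ and $\B$.
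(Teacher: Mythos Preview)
Your proposal is correct and follows essentially the same approach as the paper: both invoke the Grothendieck spectral sequence for the composition $\HH^{\ver}_0 \circ \HH^{\hor}_0 \cong \HH^{\C}_0$, reduce the acyclicity hypothesis to principal projectives $M^{\C}(W,Z)$, and verify it via Lemma~\ref{homology_and_restrict} by showing that each restriction $(\HH^{\hor}_0 M^{\C}(W,Z))_{(X,-)}$ is a free $\B$-module. The only cosmetic difference is that the paper carries out the computation by a case split $X=W$ versus $X\neq W$, whereas you package both cases uniformly via the set $U_X$; these yield the same free $\B$-module (your complement $\A(W,X)\setminus U_X$ is exactly $\A(W,W)$ when $X=W$ and empty otherwise).
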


\begin{proof}
We claim that $\HH^\hor_0$ sends projective $\C$-modules to $\HH^\ver_0$-acyclic $\C$-modules. It suffices to verify the claim for principal projective $\C$-modules. 

Let $(W,Z)\in \Ob(\C)$ and let $X\in \Ob(\A)$. 

If $X\neq W$, then 
\[ (\HH^\hor_0 ( M^{\C}(W,Z)))_{(X,-)} = 0. \]
If $X=W$, then 
\begin{align*}
   (\HH^\hor_0 ( M^{\C}(W,Z)))_{(X,-)}  &\cong M^{\C}(W,Z)_{(W,-)} & \\
   &\cong \bigoplus_{f\in \A(W,W)} M^{\B}(Z) & \mbox{(by Lemma \ref{restriction of free modules})}.
\end{align*}
Thus 
\begin{align*}
   ( \HH^\ver_0( \HH^\hor_0 ( M^{\C}(W,Z)) ) )_{(X,-)} &\cong \HH^{\B}_0 ( (\HH^\hor_0 ( M^{\C}(W,Z)))_{(X, -)} ) & \mbox{(by Lemma \ref{homology_and_restrict})}\\
   &= 0 . &
\end{align*}
This proves the claim. 

Using \eqref{vhor and vver} and Grothendieck spectral sequence of the composition $\HH^\ver_0 \HH^\hor_0$, we obtain the first spectral sequence. Similarly for the second spectral sequence.
\end{proof}

\section{Long exact sequence} \label{sec:les}

\subsection{}
In this section, we give a generalization to $\FI^m$-modules of the long exact sequence which Church constructed for $\FI$-modules in \cite{church}. We start by recalling the functors $\mathbf{\Sigma}$, $\mathbf{K}$, $\mathbf{D}$ and their basic properties following \cite{liyu}. 

Let $i\in [m]$. Define $\mathbf{e}_i \in \N^m$ by 
\[ \mathbf{e}_i = (0, \ldots, 1, \ldots, 0), \]
where $1$ is in the $i$-th coordinate. There is a functor $\iota_i : \FI^m \to \FI^m$ defined on objects by 
\begin{align*} 
   \iota_i : \Ob(\FI^m) &\to \Ob(\FI^m), \\ 
   [\mathbf{n}] &\mapsto [\mathbf{n}+\mathbf{e}_i],
\end{align*}
and on morphisms by
\begin{align*}
    \iota_i : \FI^m([\mathbf{n}], [\mathbf{r}]) &\to \FI^m([\mathbf{n}+\mathbf{e}_i], [\mathbf{r}+\mathbf{e}_i]), \\
     (f_1, \ldots, f_m) &\mapsto (g_1, \ldots, g_m), 
\end{align*}
where
$g_j=f_j$ for all $j\neq i$ and
\[ g_i(t) = \left\{ \begin{array}{ll} 
1 & \mbox{ if } t=1, \\
f_i(t-1)+1 & \mbox{ if } t>1. \end{array} \right.\]
The $i$-th shift functor 
\[ \Sigma_i : \FI^m\Mod \to \FI^m\Mod \] 
is defined to be the pullback via $\iota_i$. Thus, for any $\FI^m$-module $V$ and $\mathbf{n}\in \N^m$, we have 
\[ (\Sigma_i V)_{\mathbf{n}} = V_{\mathbf{n}+\mathbf{e}_i}. \] 

For each $[\mathbf{n}]=([n_1], \ldots, [n_m])\in\Ob(\FI^m)$, let 
\[ \varpi_i : [\mathbf{n}] \to [\mathbf{n}+\mathbf{e}_i] \] 
be the morphism of $\FI^m$ whose $j$-th component is the identity map on $[n_j]$ for $j\neq i$, and whose $i$-th component is the map $[n_i] \to [n_i+1]$, $t \mapsto t+1$. 

Let $V$ be an $\FI^m$-module. We have a natural homomorphism 
\[ \varepsilon_i: V\to \Sigma_i V \] 
defined at each $[\mathbf{n}]\in\Ob(\FI^m)$ to be the map $V_{\mathbf{n}} \to V_{\mathbf{n} + \mathbf{e}_i}$ induced by the morphism $\varpi_i$. 
Let $K_i V$ and $D_i V$ be, respectively, the kernel and cokernel of $\varepsilon_i: V\to \Sigma_i V$. Thus we have the exact sequence
\[ \xymatrix{
    0 \ar[r] & K_i V \ar[r] & V \ar[r]^-{\varepsilon_i} & \Sigma_i V \ar[r] & D_i V \ar[r] & 0. } \]

It is easy to see that: for any 
\begin{align*}
   \mathbf{n}&=(n_1, \ldots, n_m)\in \N^m,\\ 
   \mathbf{r}&=(r_1, \ldots, r_m)\in \N^m,\\ 
   \mathbf{f}&=(f_1, \ldots, f_m)\in \FI^m([\mathbf{n}], [\mathbf{r}]),
\end{align*}
if $n_i < r_i$ and $v\in (K_iV)_{\mathbf{n}}$, then $\mathbf{f}_*(v)=0$.

\begin{lemma} \label{lem:restriction of K1V}
   Let $m, x\in \mathbb{Z}$ with $m\geqslant 2$ and $x\geqslant 0$. Let $V$ be an $\FI^m$-module. Then we have:
   \begin{align*}
      t_0((K_1V)_{(x,-)}) &\leqslant \max\{ -1, t_0(K_1V)-x \}, \\
      t_1((K_1V)_{(x,-)}) &\leqslant \max\{ -1, t_1(K_1V)-x \}. 
   \end{align*}
\end{lemma}

\begin{proof}
   Let us consider $K_1V$ as a module over $\FI\times \FI^{m-1}$. 
   
   First, observe that
   \begin{equation} \label{eq:verK1}
      \HH^\ver_0(K_1V) = \HH^{\FI^m}_0(K_1V). 
   \end{equation}
   Hence we have:
   \begin{align*}
      \HH^{\FI^{m-1}}_0((K_1V)_{(x,-)}) &\cong (\HH^\ver_0(K_1V))_{(x,-)} & \mbox{(by Lemma \ref{homology_and_restrict})}\\
      &=  (\HH^{\FI^m}_0(K_1V))_{(x,-)} & \mbox{(by \eqref{eq:verK1}).}
   \end{align*}
   Therefore 
   \begin{align*}
     t_0((K_1V)_{(x,-)}) &= \deg(  \HH^{\FI^{m-1}}_0((K_1V)_{(x,-)}) ) \\
     &= \deg( (\HH^{\FI^m}_0(K_1V))_{(x,-)} ) \\
     &\leqslant \max\{ -1, t_0(K_1V)-x\}.
   \end{align*}
   
   Next, observe that
   \begin{equation} \label{eq:horK1}
      \HH^\hor_0(K_1V) = K_1V.
   \end{equation}   
   Hence we have:
   \begin{align*}
      \HH^{\FI^{m-1}}_1((K_1V)_{(x,-)}) &\cong (\HH^\ver_1(K_1V))_{(x,-)} &  \mbox{(by Lemma \ref{homology_and_restrict})}\\
      &= (\HH^\ver_1(\HH^\hor_0(K_1V) ))_{(x,-)} & \mbox{(by \eqref{eq:horK1}).}
   \end{align*}
   Observe also that from the first spectral sequence in Theorem \ref{grothendieck} applied to $K_1V$, we have an epimorphism 
   \[ \HH^{\FI^m}_1(K_1V) \to \HH^\ver_1(\HH^\hor_0(K_1V) ). \]
   Thus we have an epimorphism
   \[ (\HH^{\FI^m}_1(K_1V))_{(x,-)} \to (\HH^\ver_1(\HH^\hor_0(K_1V) ))_{(x.-)}. \]
   Therefore 
   \begin{align*}
      t_1((K_1V)_{(x,-)}) &= \deg(  \HH^{\FI^{m-1}}_1((K_1V)_{(x,-)}) ) \\
      &= \deg(  (\HH^\ver_1(\HH^\hor_0(K_1V) ))_{(x,-)}  ) \\
      &\leqslant \deg( (\HH^{\FI^m}_1(K_1V))_{(x,-)} ) \\
      &\leqslant \max\{ -1, t_1(K_1V)-x \}.    
   \end{align*}
\end{proof}

Define the functors $\mathbf{\Sigma}, \mathbf{K}, \mathbf{D}$ on $\FI^m\Mod$ by 
\[ \mathbf{\Sigma}V = \bigoplus_{i=1}^m \Sigma_i V, \qquad
\mathbf{K}V = \bigoplus_{i=1}^m K_i V, \qquad 
\mathbf{D}V = \bigoplus_{i=1}^m D_i V. \]
We have the exact sequence
\[ \xymatrix{
   0 \ar[r] & \mathbf{K} V  \ar[r] & V^{\oplus m}  \ar[r] & \mathbf{\Sigma} V  \ar[r] & \mathbf{D} V  \ar[r] & 0. } \]
We note that the functors $\Sigma_i$ and $\mathbf{\Sigma}$ are exact, while the functors $D_i$ and $\mathbf{D}$ are right exact.

\begin{lemma} \label{degS}
   Let $V$ be an $\FI^m$-module. Then $\deg(V) \leqslant 1 + \deg(\mathbf{\Sigma} V)$. 
\end{lemma}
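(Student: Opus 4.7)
The plan is to compare the set of $\mathbf{n}\in \N^m$ for which $V_{\mathbf{n}}\neq 0$ with the corresponding set for $\mathbf{\Sigma}V$, using directly the formula $(\Sigma_i V)_{\mathbf{n}} = V_{\mathbf{n}+\mathbf{e}_i}$. Since $\mathbf{\Sigma}V$ is a direct sum, $(\mathbf{\Sigma}V)_{\mathbf{n}}\neq 0$ if and only if $V_{\mathbf{n}+\mathbf{e}_i}\neq 0$ for some $i\in[m]$, so understanding $\deg(\mathbf{\Sigma}V)$ reduces to understanding when the shifted coordinates give a nonzero value of $V$.

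I would handle the trivial boundary cases first. If $V=0$, both sides equal $-1$. If $\deg(V)=0$, the inequality reads $0\leqslant 1+\deg(\mathbf{\Sigma}V)$, which follows from $\deg(\mathbf{\Sigma}V)\geqslant -1$. The case $\deg(V)=\infty$ will follow from the main argument applied for arbitrarily large values. So the interesting case is $\deg(V)=d$ with $d\geqslant 1$ (or the analogous statement for arbitrary $N\leqslant\deg(V)$ when $\deg(V)=\infty$).

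For the main case, pick $\mathbf{n}=(n_1,\ldots,n_m)$ with $|\mathbf{n}|=d$ and $V_{\mathbf{n}}\neq 0$. Since $d\geqslant 1$, some coordinate $n_i$ satisfies $n_i\geqslant 1$, so $\mathbf{n}-\mathbf{e}_i$ is a valid element of $\N^m$. Then
\[ (\Sigma_i V)_{\mathbf{n}-\mathbf{e}_i} = V_{(\mathbf{n}-\mathbf{e}_i)+\mathbf{e}_i} = V_{\mathbf{n}} \neq 0, \]
so $(\mathbf{\Sigma}V)_{\mathbf{n}-\mathbf{e}_i}\neq 0$ as well. This gives $\deg(\mathbf{\Sigma}V)\geqslant |\mathbf{n}-\mathbf{e}_i|=d-1$, hence $\deg(V)=d\leqslant 1+\deg(\mathbf{\Sigma}V)$. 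The same argument applied to arbitrarily large $|\mathbf{n}|$ settles the $\infty$ case.

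There is no real obstacle here; the lemma is essentially a direct unpacking of the definitions of $\Sigma_i$ and $\deg$. The only point to be slightly careful about is the bookkeeping of the boundary values in $\ZZ=\{-1\}\cup\N$, which is why I would treat the cases $\deg(V)\in\{-1,0\}$ separately before running the one-line argument for $\deg(V)\geqslant 1$.
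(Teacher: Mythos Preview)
Your argument is correct and essentially identical to the paper's own proof: both handle $\deg V\leqslant 0$ trivially, then for $\deg V>0$ pick $\mathbf{n}$ with $V_{\mathbf{n}}\neq 0$ and $|\mathbf{n}|\geqslant 1$, choose $i$ with $n_i\geqslant 1$, and observe $(\Sigma_i V)_{\mathbf{n}-\mathbf{e}_i}=V_{\mathbf{n}}\neq 0$. Your write-up is slightly more explicit about the $\deg V=\infty$ case, but there is no substantive difference.
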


\begin{proof}
   If $\deg V\leqslant 0$, the lemma is obvious. 
   
   Assume $\deg V > 0$. Consider any $\mathbf{n}=(n_1,\ldots,n_m)\in \N^m$ such that $|\mathbf{n}|>0$ and $V_{\mathbf{n}}\neq 0$. Since $|\mathbf{n}|>0$, there exists $i\in[m]$ such that $n_i\geqslant 1$. For this $i$, we have 
   \[ (\Sigma_i V)_{\mathbf{n} - \mathbf{e}_i}=V_{\mathbf{n}}\neq 0, \] 
   thus $(\mathbf{\Sigma} V)_{\mathbf{n} - \mathbf{e}_i}\neq 0$. The lemma follows.
\end{proof}

\begin{lemma} \label{degD}
   Let $d,r\in\ZZ$ and let $V$ be an $\FI^m$-module generated in degree $\leqslant d$ and related in degree $\leqslant r$. If $d\geqslant 0$, then $\mathbf{D}V$ is an $\FI^m$-module generated in degree $\leqslant d-1$ and related in degree $\leqslant r$. 
\end{lemma}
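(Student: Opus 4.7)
The plan is to apply the right exact functor $\mathbf{D}$ to a free presentation of $V$ and read off a presentation of $\mathbf{D}V$ of the required form. The key technical input is a direct computation of $D_i$ on each principal projective $M^{\FI^m}([\mathbf{n}])$: using the combinatorial description $M^{\FI^m}([\mathbf{n}])_{[\mathbf{k}]} = k[\FI^m([\mathbf{n}], [\mathbf{k}])]$, the map $\varepsilon_i$ post-composes the $i$-th coordinate of each basis tuple with the shift $t \mapsto t + 1$, so its image consists of the tuples whose $i$-th component avoids $1$. Partitioning the quotient basis by which $j \in [n_i]$ maps to $1$ yields a natural isomorphism
\[ D_i\, M^{\FI^m}([\mathbf{n}]) \;\cong\; M^{\FI^m}([\mathbf{n}-\mathbf{e}_i])^{\oplus n_i}, \]
understood as $0$ when $n_i = 0$. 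In particular, $\mathbf{D}$ takes a free $\FI^m$-module generated in degree $\leqslant e$ to a free $\FI^m$-module generated in degree $\leqslant e - 1$.

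With that in place, fix a presentation $0 \to U \to P \to V \to 0$ with $P$ free in degree $\leqslant d$ and $t_0(U) \leqslant r$, and choose a free cover $Q \twoheadrightarrow U$ with $Q$ free in degree $\leqslant r$. Right exactness of $\mathbf{D}$ produces surjections $\mathbf{D}Q \twoheadrightarrow \mathbf{D}U$ and $\mathbf{D}P \twoheadrightarrow \mathbf{D}V$, in which $\mathbf{D}P$ and $\mathbf{D}Q$ are free in degrees $\leqslant d-1$ and $\leqslant r-1$ respectively. The surjection $\mathbf{D}P \twoheadrightarrow \mathbf{D}V$ shows that $\mathbf{D}V$ is generated in degree $\leqslant d-1$. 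Setting $U' = \ker(\mathbf{D}P \to \mathbf{D}V)$, exactness identifies $U'$ with the image of $\mathbf{D}U \to \mathbf{D}P$, making it a quotient of $\mathbf{D}U$, and in turn of $\mathbf{D}Q$; hence $t_0(U') \leqslant r - 1 \leqslant r$. (The case $r = -1$ is trivial since then $U = 0$.) Therefore $0 \to U' \to \mathbf{D}P \to \mathbf{D}V \to 0$ is the desired presentation.

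The only substantive step is the computation of $D_i$ on principal projectives; once this is in hand, the remaining argument is a short diagram chase that exploits the right exactness of $\mathbf{D}$.
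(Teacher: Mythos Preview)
Your proof is correct and follows essentially the same route as the paper: apply the right exact functor $\mathbf{D}$ to a free presentation and use that $\mathbf{D}$ carries a free module generated in degree $\leqslant e$ to a free module generated in degree $\leqslant e-1$. The only difference is that the paper outsources the computation $D_i\,M^{\FI^m}([\mathbf{n}]) \cong M^{\FI^m}([\mathbf{n}-\mathbf{e}_i])^{\oplus n_i}$ to \cite{liyu}*{Lemma 2.3}, whereas you sketch it directly; your explicit diagram chase also recovers the sharper bound $r-1$ noted in the paper's remark following the lemma.
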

\begin{proof}
   Let
   \[ 0 \to U \to P \to V \to 0\]
   be a short exact sequence where $P$ is a free $\FI^m$-module generated in degree $\leqslant d$ and $U$ is an $\FI^m$-module generated in degree $\leqslant r$. Since the functor $\mathbf{D}$ is right exact, we have an exact sequence 
   \[ \mathbf{D}U \to \mathbf{D}P \to \mathbf{D}V \to 0. \]
   The lemma now follows from \cite{liyu}*{Lemma 2.3}.
\end{proof}

In Lemma \ref{degD}, when $r\geqslant 0$, the proof shows that $\mathbf{D}V$ is in fact related in degree $\leqslant r-1$. For the sake of convenience, we stated the lemma in the slightly weaker form so that we do not need to distinguish between the case $r=-1$ and the case $r\geqslant 0$.

\subsection{}
Let $i\in [m]$. For each $p\in \N$, write $L_p D_i$ for the $p$-th left derived functor of the right exact functor $D_i$. 

\begin{lemma} \label{2-row}
   Let $V$ be an $\FI^m$-module. Then for each $i\in [m]$, we have:
   \begin{enumerate}[(i)]
      \item $L_1 D_i (V) \cong K_i(V)$.
      \item $L_p D_i (V) = 0$ for all $p\geqslant 2$.
   \end{enumerate}
\end{lemma}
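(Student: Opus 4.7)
The plan is to reduce the computation of the left derived functors of $D_i$ to a long exact sequence coming from a short exact sequence of resolutions, where the essential input is that $\varepsilon_i$ is injective on projective (equivalently, free) $\FI^m$-modules.

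First I would establish that $K_i(P)=0$ for every projective $\FI^m$-module $P$. Since $K_i$ is additive and kernels commute with direct sums, it suffices to verify that $\varepsilon_i : M^{\FI^m}([\mathbf{n}]) \to \Sigma_i M^{\FI^m}([\mathbf{n}])$ is injective for every $[\mathbf{n}]\in\Ob(\FI^m)$. At each $[\mathbf{r}]$, this map is the $k$-linear map between free $k$-modules with bases $\FI^m([\mathbf{n}],[\mathbf{r}])$ and $\FI^m([\mathbf{n}],[\mathbf{r}+\mathbf{e}_i])$ that sends a morphism $(f_1,\dots,f_m)$ to $\varpi_i\circ(f_1,\dots,f_m)$. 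Because the component $\varpi_i:[r_i]\to[r_i+1]$ is the injective map $t\mapsto t+1$, post-composition with $\varpi_i$ is an injection of basis sets, so $\varepsilon_i$ is injective.

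Next I would pick a projective resolution $P_\bullet\to V$ and apply the exact functor $\Sigma_i$ and the right exact functor $D_i$. By the previous step, $K_i(P_p)=0$ for every $p$, so for each $p$ we have a short exact sequence
\[ 0 \to P_p \xrightarrow{\varepsilon_i} \Sigma_i P_p \to D_i P_p \to 0, \]
and these assemble into a short exact sequence of chain complexes
\[ 0 \to P_\bullet \to \Sigma_i P_\bullet \to D_i P_\bullet \to 0. \]
The associated long exact sequence in homology reads
\[ \cdots \to \HH_p(P_\bullet) \to \HH_p(\Sigma_i P_\bullet) \to \HH_p(D_i P_\bullet) \to \HH_{p-1}(P_\bullet) \to \cdots. \]
Since $P_\bullet$ resolves $V$, its homology is $V$ in degree $0$ and vanishes otherwise; exactness of $\Sigma_i$ gives $\HH_p(\Sigma_i P_\bullet)\cong \Sigma_i V$ in degree $0$ and $0$ otherwise; and by definition $\HH_p(D_i P_\bullet) = L_p D_i(V)$.

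For $p\geqslant 2$ the long exact sequence squeezes $L_p D_i(V)$ between two zero terms, proving (ii). For $p=1$ the tail of the sequence reads
\[ 0 \to L_1 D_i(V) \to V \xrightarrow{\varepsilon_i} \Sigma_i V \to D_i(V) \to 0, \]
and so $L_1 D_i(V)\cong \ker(\varepsilon_i) = K_i(V)$, proving (i). The only nontrivial step is the injectivity of $\varepsilon_i$ on free $\FI^m$-modules, but this reduces to the purely combinatorial observation above about post-composition with $\varpi_i$; everything else is a standard application of the long exact sequence of a short exact sequence of complexes.
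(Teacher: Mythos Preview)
Your argument is correct. The key input in both your proof and the paper's is the same: $K_i(P)=0$ for every projective (equivalently free) $\FI^m$-module $P$; the paper cites \cite{liyu}*{Lemma 2.3} for this, while you verify it directly by the combinatorial observation that post-composition with $\varpi_i$ is injective on basis sets.

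Where the two proofs differ is in how the homological algebra is packaged. The paper works with a single short exact sequence $0\to U\to P\to V\to 0$ with $P$ free, uses the snake lemma applied to the diagram $V\to\Sigma_i V$ over $U\to P\to V$ to identify $L_1 D_i(V)$ with $K_i(V)$, and then obtains (ii) by dimension shifting via $L_p D_i(V)\cong L_{p-1}D_i(U)$. Your route instead takes a full projective resolution $P_\bullet$ and passes directly to the short exact sequence of complexes $0\to P_\bullet\to \Sigma_i P_\bullet\to D_i P_\bullet\to 0$, reading off both (i) and (ii) from one long exact sequence in homology. Your approach is slightly more streamlined, getting the higher vanishing without an inductive step; the paper's has the minor advantage of only ever invoking a one-term resolution. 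One small point worth making explicit in your write-up: the identification of the connecting map $H_0(P_\bullet)\to H_0(\Sigma_i P_\bullet)$ with $\varepsilon_i:V\to\Sigma_i V$ uses the naturality of $\varepsilon_i$ with respect to the augmentation $P_0\to V$.
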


\begin{proof}
   The $m=1$ case is proved in \cite{ce}*{Lemma 4.7}. The $m>1$ case is essentially the same so we give only a sketch of the argument.

   Let 
   \[ 0 \to U \to P \to V \to 0 \] 
   be a short exact sequence of $\FI^m$-modules where $P$ is free. Then by the long exact sequence of left derived functors and the fact that $L_p D_i(P)=0$ for all $p\geqslant 1$, we see that: 
   \begin{itemize}
      \item $L_1 D_i(V)$ is the kernel of the morphism $D_i(U) \to D_i(P)$.
      \item $L_p D_i(V) \cong L_{p-1} D_i(U)$ for all $p\geqslant 2$.
   \end{itemize}
   
   On the other hand, we have the following commuting diagram with exact rows:
   \[ \xymatrix{ 
      0 \ar[r] & U \ar[r] \ar[d] & P \ar[r] \ar[d] & V \ar[r] \ar[d] & 0 \\
   0 \ar[r] & \Sigma_i U \ar[r] & \Sigma_i P \ar[r] & \Sigma_i V \ar[r] & 0 
   } \] 
   By the snake lemma and the fact that $K_i(P)=0$ (see \cite{liyu}*{Lemma 2.3}), we see that $K_iV$ is the kernel of the morphism $D_i(U)\to D_i(P)$. Hence $L_1 D_i(V) \cong K_i(V)$. This proves that (i) holds for any $\FI^m$-module $V$.

   We deduce that $L_1 D_i(U) \cong K_i(U) \subset K_i(P) = 0$, thus $L_1 D_i(U) = 0$. Since $L_2 D_i(V) \cong L_1 D_i(U)$, it follows that $L_2 D_i(V) = 0$. This holds for any $\FI^m$-module $V$, hence $L_p D_i(V)=0$ for all $p\geqslant 2$. 
\end{proof}

\subsection{}

\begin{lemma} \label{church_lemma}
   Let $i\in [m]$.
\begin{enumerate}[(i)]
\item 
For any $\FI^m$-module $V$, one has: 
\[\Sigma_i \widetilde{V} = \widetilde{\Sigma_i V} + \varepsilon_i (V),\]
an equality of $\FI^m$-submodules of $\Sigma_i V$.

\item
There is an isomorphism of functors:
\[ \HH^{\FI^m}_0 \circ D_i \cong \Sigma_i \circ \HH^{\FI^m}_0. \]
\end{enumerate}
\end{lemma}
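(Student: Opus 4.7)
The plan is to prove (i) by a double inclusion and then deduce (ii) from it by a short formal argument. The forward inclusion $\widetilde{\Sigma_i V} + \varepsilon_i(V) \subseteq \Sigma_i \widetilde{V}$ is straightforward, and the real work — and the main obstacle — is the reverse inclusion, which will be handled by a case analysis on a morphism $f : [\mathbf{r}] \to [\mathbf{n}+\mathbf{e}_i]$ according to whether $1$ lies in the image of the $i$-th component $f_i$.

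For the forward inclusion, I evaluate at $[\mathbf{n}]$ and observe both summands land inside $\widetilde{V}_{[\mathbf{n}+\mathbf{e}_i]} = \Sigma_i \widetilde{V}_{[\mathbf{n}]}$: generators of $\widetilde{\Sigma_i V}_{[\mathbf{n}]}$ arise as images of $V(\iota_i(\phi))$ for $\phi : [\mathbf{s}] \to [\mathbf{n}]$ with $[\mathbf{s}] \prec [\mathbf{n}]$, and then $\iota_i(\phi) : [\mathbf{s}+\mathbf{e}_i] \to [\mathbf{n}+\mathbf{e}_i]$ satisfies $[\mathbf{s}+\mathbf{e}_i] \prec [\mathbf{n}+\mathbf{e}_i]$; and $\varepsilon_i(V)_{[\mathbf{n}]}$ is the image of $V(\varpi_i)$, with $\varpi_i : [\mathbf{n}] \to [\mathbf{n}+\mathbf{e}_i]$ also coming from a strictly smaller source. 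For the reverse inclusion, fix $f = (f_1, \ldots, f_m) : [\mathbf{r}] \to [\mathbf{n}+\mathbf{e}_i]$ with $[\mathbf{r}] \prec [\mathbf{n}+\mathbf{e}_i]$. If $1 \notin \mathrm{image}(f_i)$, then subtracting $1$ from the $i$-th component produces $f' : [\mathbf{r}] \to [\mathbf{n}]$ with $f = \varpi_i \circ f'$, placing $f_*(V_{[\mathbf{r}]}) \subseteq \varepsilon_i(V)_{[\mathbf{n}]}$. If $1 \in \mathrm{image}(f_i)$, precomposing with the transposition in $S_{r_i}$ swapping $1$ and the preimage of $1$ — an automorphism of $[\mathbf{r}]$ that leaves $f_*(V_{[\mathbf{r}]})$ unchanged — reduces to the case $f_i(1) = 1$, whereupon $f = \iota_i(\phi)$ for a unique $\phi : [\mathbf{r}-\mathbf{e}_i] \to [\mathbf{n}]$. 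A brief check confirms $[\mathbf{r}-\mathbf{e}_i] \prec [\mathbf{n}]$: componentwise $\mathbf{r} \leqslant \mathbf{n}+\mathbf{e}_i$ yields $\mathbf{r}-\mathbf{e}_i \leqslant \mathbf{n}$, and equality throughout would force $\mathbf{r} = \mathbf{n}+\mathbf{e}_i$, contradicting $[\mathbf{r}] \prec [\mathbf{n}+\mathbf{e}_i]$. Hence $f_*(V_{[\mathbf{r}]}) \subseteq \widetilde{\Sigma_i V}_{[\mathbf{n}]}$.

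For part (ii), I will use the elementary observation that for any quotient $W = U/T$ of $\FI^m$-modules one has $\widetilde{W} = (\widetilde{U}+T)/T$. Applied to $D_i V = \Sigma_i V / \varepsilon_i(V)$, this yields
\[ \HH^{\FI^m}_0(D_i V) = \Sigma_i V / (\widetilde{\Sigma_i V} + \varepsilon_i(V)), \]
while exactness of $\Sigma_i$ gives $\Sigma_i \HH^{\FI^m}_0(V) = \Sigma_i V / \Sigma_i \widetilde{V}$. The identity of these two quotients is precisely (i), and since the identification is visibly natural in $V$, one obtains the desired isomorphism of functors.
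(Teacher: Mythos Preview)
Your proof is correct and follows the same approach as the paper: part (i) proceeds by the identical case analysis on whether $1 \in \mathrm{Im}(f_i)$, with your transposition reduction and verification that $[\mathbf{r}-\mathbf{e}_i] \prec [\mathbf{n}]$ making explicit what the paper leaves as ``easy to see.'' Your argument for (ii), computing $\HH^{\FI^m}_0(D_iV)$ directly as the quotient $\Sigma_i V/(\widetilde{\Sigma_i V}+\varepsilon_i(V))$ via the identity $\widetilde{U/T}=(\widetilde{U}+T)/T$, is a touch more direct than the paper's comparison of two right-exact sequences, but both are immediate formal consequences of (i).
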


\begin{proof}
(i) Let $\mathbf{n}\in\Ob(\FI^m)$. We need to prove that
\[
   \Sigma_i \widetilde{V}_{\mathbf{n}} = \widetilde{\Sigma_i V}_{\mathbf{n}} + \varepsilon_i (V)_{\mathbf{n}}.
\]

First, observe that: 
\begin{itemize}
\item 
$\Sigma_i \widetilde{V}_{\mathbf{n}}=\widetilde{V}_{\mathbf{n}+\mathbf{e}_i}$, which is spanned by the set of all $f_*(V_{\mathbf{r}})$ where $\mathbf{r}\prec \mathbf{n}+\mathbf{e}_i$ and $f\in \FI^m(\mathbf{r}, \mathbf{n}+\mathbf{e}_i)$. 

\item 
$ \widetilde{\Sigma_i V}_{\mathbf{n}}$ is spanned by the set of all $(\iota_i(f'))_*(V_{\mathbf{r'}+\mathbf{e}_i})$ where $\mathbf{r'}\prec \mathbf{n}$ and $f'\in \FI^m(\mathbf{r'}, \mathbf{n})$.

\item
$\varepsilon_i (V)_{\mathbf{n}}=(\varpi_i)_* (V_{\mathbf{n}})$.
\end{itemize}
It follows that $\Sigma_i \widetilde{V}_{\mathbf{n}} \supseteq \widetilde{\Sigma_i V}_{\mathbf{n}} + \varepsilon_i (V)_{\mathbf{n}}$.

Next, suppose that $\mathbf{r}\prec \mathbf{n}+\mathbf{e}_i$ and $f\in \FI^m(\mathbf{r}, \mathbf{n}+\mathbf{e}_i)$. Write $f=(f_1, \ldots, f_m)$. It is easy to see that: 
\begin{itemize}
\item
if $1\in \mathrm{Im}(f_i)$, then $f_*(V_{\mathbf{r}})\subseteq \widetilde{\Sigma_i V}_{\mathbf{n}}$; 

\item 
if $1\notin \mathrm{Im}(f_i)$, then $f_*(V_{\mathbf{r}})\subseteq \varepsilon_i (V)_{\mathbf{n}}$. 
\end{itemize}
Hence $\Sigma_i \widetilde{V}_{\mathbf{n}} \subseteq \widetilde{\Sigma_i V}_{\mathbf{n}} + \varepsilon_i (V)_{\mathbf{n}}$. 

(ii) Let $V$ be an $\FI^m$-module. We have the exact sequence 
\[ V\to \Sigma_i V \to D_i V \to 0. \]
Applying the right exact functor $\HH^{\FI^m}_0$ gives the exact sequence 
\begin{equation} \label{eq:lem_1_a}
   \HH^{\FI^m}_0 (V) \to \HH^{\FI^m}_0 (\Sigma_i V) \to \HH^{\FI^m}_0 (D_i V) \to 0. 
\end{equation}

On the other hand, there is a short exact sequence 
\[ 0 \to \widetilde{V} \to V \to \HH^{\FI^m}_0(V) \to 0. \]
Applying the exact functor $\Sigma_i$ gives the short exact sequence
\[ 0 \to \Sigma_i\widetilde{V} \to \Sigma_iV \to \Sigma_i\HH^{\FI^m}_0(V) \to 0. \] 
Applying the right exact functor $\HH^{\FI^m}_0$ gives the exact sequence 
\[ \HH^{\FI^m}_0(\Sigma_i\widetilde{V}) \to \HH^{\FI^m}_0(\Sigma_iV) \to \HH^{\FI^m}_0( \Sigma_i\HH^{\FI^m}_0(V) )\to 0. \]
Since $\HH^{\FI^m}_0(\Sigma_i\HH^{\FI^m}_0(V))=\Sigma_i\HH^{\FI^m}_0(V)$, the above exact sequence is
\[ \HH^{\FI^m}_0(\Sigma_i\widetilde{V}) \to \HH^{\FI^m}_0(\Sigma_iV) \to \Sigma_i\HH^{\FI^m}_0(V) \to 0. \]
By (i), the image of $ \HH^{\FI^m}_0(\Sigma_i\widetilde{V})$ in $\HH^{\FI^m}_0(\Sigma_iV)$ is equal to the image of $\HH^{\FI^m}_0(\varepsilon_i(V))$ in $\HH^{\FI^m}_0(\Sigma_iV)$. Hence we have the exact sequence 
\begin{equation} \label{eq:lem_1_b}
   \HH^{\FI^m}_0 (V) \to \HH^{\FI^m}_0 (\Sigma_i V) \to \Sigma_i\HH^{\FI^m}_0(V) \to 0. 
\end{equation}
It follows from (\ref{eq:lem_1_a}) and (\ref{eq:lem_1_b}) that $ \HH^{\FI^m}_0 (D_i V) \cong \Sigma_i\HH^{\FI^m}_0(V)$.
\end{proof}

The following is a straightforward generalization of the long exact sequence of Church \cite{church}.

\begin{theorem} \label{church_les}
   Let $V$ be an $\FI^m$-module.
   \begin{enumerate}[(i)]
      \item Let $i\in [m]$. Then there is a long exact sequence 
      \begin{multline*}
         \ldots \to \HH^{\FI^m}_{p-1} (K_iV) \to \Sigma_i \HH^{\FI^m}_p (V) \to \HH^{\FI^m}_p (D_iV) \to 
         \HH^{\FI^m}_{p-2} (K_iV) \to \ldots \\
         \ldots \to \Sigma_i \HH^{\FI^m}_1 (V) \to \HH^{\FI^m}_1 (D_iV) \to 0.
      \end{multline*}
      \item There is a long exact sequence
      \begin{multline*}
         \ldots \to \HH^{\FI^m}_{p-1} (\mathbf{K}V) \to \mathbf{\Sigma} \HH^{\FI^m}_p (V) \to \HH^{\FI^m}_p (\mathbf{D}V) \to 
         \HH^{\FI^m}_{p-2} (\mathbf{K}V) \to \ldots \\
         \ldots \to \mathbf{\Sigma} \HH^{\FI^m}_1 (V) \to \HH^{\FI^m}_1 (\mathbf{D}V) \to 0.
      \end{multline*}
   \end{enumerate}
\end{theorem}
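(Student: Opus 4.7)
The plan is to prove part (i) via the Grothendieck spectral sequence for the composition $\HH^{\FI^m}_0 \circ D_i$, and then to deduce (ii) by taking direct sums of the sequences in (i) over $i \in [m]$.

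To invoke the Grothendieck spectral sequence I first need $D_i$ to send projective $\FI^m$-modules to $\HH^{\FI^m}_0$-acyclic ones. Since every projective is a summand of a free, it suffices to check principal projectives. A direct calculation identifies $D_i M^{\FI^m}([\mathbf{n}])_{[\mathbf{r}]}$ with the free $k$-module on those morphisms $[\mathbf{n}] \to [\mathbf{r} + \mathbf{e}_i]$ whose $i$-th component contains $1$ in its image; recording the preimage of $1$ as extra data then yields
\[ D_i M^{\FI^m}([\mathbf{n}]) \cong M^{\FI^m}([\mathbf{n} - \mathbf{e}_i])^{\oplus n_i} \]
(and zero when $n_i = 0$). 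Thus $D_i$ takes free modules to free modules, hence projectives to projectives.

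With acyclicity established, the Grothendieck spectral sequence reads
\[ E^2_{p, q} = \HH^{\FI^m}_p(L_q D_i(V)) \Longrightarrow L_{p+q}(\HH^{\FI^m}_0 \circ D_i)(V). \]
By Lemma \ref{2-row} only the rows $q = 0$ and $q = 1$ are nonzero, with $E^2_{p, 0} = \HH^{\FI^m}_p(D_i V)$ and $E^2_{p, 1} = \HH^{\FI^m}_p(K_i V)$. By Lemma \ref{church_lemma}(ii) together with the exactness of $\Sigma_i$, the abutment equals $\Sigma_i \HH^{\FI^m}_{p+q}(V)$. A first-quadrant spectral sequence concentrated in two adjacent rows collapses at $E^3$; its $d_2$-differentials $E^2_{p, 0} \to E^2_{p-2, 1}$, combined with the two-step abutment filtration $0 \to E^\infty_{n-1, 1} \to L_n \to E^\infty_{n, 0} \to 0$, splice into precisely the long exact sequence stated in (i), which terminates at $\HH^{\FI^m}_1(D_i V) \to 0$ because $\HH^{\FI^m}_{-1}(K_i V) = 0$. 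Part (ii) follows from (i) by taking direct sums over $i \in [m]$, using that $\mathbf{\Sigma}$, $\mathbf{K}$, $\mathbf{D}$ are defined as such sums and that exactness is preserved by direct sums.

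The main obstacle is the explicit verification that $D_i$ preserves freeness of principal projectives; once this is in hand, the rest is a routine two-row spectral sequence collapse combined with the identifications from Lemmas \ref{2-row} and \ref{church_lemma}(ii).
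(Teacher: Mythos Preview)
Your proof is correct and follows essentially the same route as the paper: both set up the Grothendieck spectral sequence for $\HH^{\FI^m}_0 \circ D_i$, identify the abutment via Lemma~\ref{church_lemma}(ii) and exactness of $\Sigma_i$, invoke Lemma~\ref{2-row} to reduce to two rows, and read off the long exact sequence, with (ii) obtained by summing over $i$. The only cosmetic difference is that you sketch the computation $D_i M^{\FI^m}([\mathbf{n}]) \cong M^{\FI^m}([\mathbf{n}-\mathbf{e}_i])^{\oplus n_i}$ directly, whereas the paper cites this as \cite{liyu}*{Lemma 2.3}.
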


\begin{proof}
   (ii) follows from (i) by taking direct sum over all $i\in [m]$, so we only need to prove (i). 

   Since the functors $\HH^{\FI^m}_0$ and $D_i$ are right exact, and $D_i$ sends projective modules to projective modules (by \cite{liyu}*{Lemma 2.3}), we have a first-quadrant Grothendieck spectral sequence 
   \[ E^2_{pq} = \HH^{\FI^m}_p(L_q D_i(V)) \Rightarrow L_{p+q}(\HH^{\FI^m}_0\circ D_i)(V). \]
   By Lemma \ref{church_lemma} and the exactness of $\Sigma_i$, we have 
   \[ L_{p+q}(\HH^{\FI^m}_0\circ D_i)(V) \cong L_{p+q}(\Sigma_i\circ \HH^{\FI^m}_0)(V) \cong \Sigma_i \HH^{\FI^m}_{p+q}(V). \]
   Thus the spectral sequence above converges to $\Sigma_i\HH^{\FI^m}_*(V)$. 

   Now by Lemma \ref{2-row}, we have: 
   \[ E^2_{pq} = \begin{cases}
      \HH^{\FI^m}_p(D_i(V)) & \mbox{ if }q=0,\\
      \HH^{\FI^m}_p(K_i(V)) & \mbox{ if }q=1,\\
      0 & \mbox{ if }q\geqslant 2. 
   \end{cases}\]
   Hence the long exact sequence in (i) is the long exact sequence associated to a two-row spectral sequence (see for example \cite{weibel}*{Exercise 5.2.2}).
\end{proof}

The following corollary is well-known when $m=1$; see for example \cite{gan}*{Lemma 7} or \cite{ramos}*{Corollary 3.13}.

\begin{corollary} \label{cor:finite degree case}
   Let $V$ be an $\FI^m$-module. Assume that $\deg(V) < \infty$. Then 
   \[ \reg(V) \leqslant \deg(V). \] 
\end{corollary}

\begin{proof}
   For any integers $d\geqslant -1$ and $i\geqslant 0$, denote by $\mathfrak{S}(d, i)$ the following statement:
   \begin{equation*}
      \parbox{.85\textwidth}{\it If $V$ is an $\FI^m$-module such that $\deg(V)\leqslant d$, then $t_i(V)\leqslant i+d$.}
   \end{equation*}
   We shall prove that $\mathfrak{S}(d,i)$ is true by nested induction. The outer induction is over $d$ and the inner induction is over $i$. 
   
   It is easy to see that $\mathfrak{S}(-1, i)$ is true for all $i\geqslant 0$, and $\mathfrak{S}(d, 0)$ is true for all  $d\geqslant -1$.

   Now fix $d\geqslant 0$ and $i\geqslant 1$. Assume that: 
   \begin{itemize}
      \item $\mathfrak{S}(d-1, j)$ is true for all $j\geqslant 0$;
      \item $\mathfrak{S}(d, i-1)$ is true.
   \end{itemize}
   To prove that $\mathfrak{S}(d,i)$ is true, let $V$ be an $\FI^m$-module such that $\deg(V)\leqslant d$. We need to show that $t_i(V) \leqslant i+d$. 
   
   We have:
   \begin{align*}
      t_i(V) &= \deg \HH^{\FI^m}_i(V) & \\
      &\leqslant 1+\deg \mathbf{\Sigma} \HH^{\FI^m}_i(V) & \mbox{(by Lemma \ref{degS})} \\
      &\leqslant \max\{1+t_{i-1} (\mathbf{K}V) , 1+t_i(\mathbf{D}V)\} & \mbox{(by Theorem \ref{church_les}).}
   \end{align*}
   Since $\mathbf{K}V$ is a submodule of $V^{\oplus m}$, we have $\deg(\mathbf{K}V)\leqslant d$. Using the assumption $\mathfrak{S}(d, i-1)$, we obtain:
   \[ t_{i-1}(\mathbf{K}V) \leqslant i+d-1. \]
   Since $\mathbf{D}V$ is a surjective image of $\mathbf{\Sigma}V$, we have $\deg(\mathbf{D}V)\leqslant d-1$. Using the assumption $\mathfrak{S}(d-1, i)$, we obtain:       
   \[ t_i(\mathbf{D}V) \leqslant i+d-1. \]
   It follows from the above that $t_i(V) \leqslant i+d$, as desired. 
\end{proof}

\section{Proof of main result} \label{sec:main proof}

\subsection{}

First, a straightforward appplication of the spectral sequences in Theorem \ref{grothendieck} yields the following result.

\begin{proposition} \label{prop:1}
   Let $m, \alpha, \beta\in \mathbb{Z}$ with $m\geqslant 2$ and $\alpha, \beta\geqslant -1$. Let $V$ be an $\FI^m$-module such that for all $(n_1, n_2, \ldots, n_m)\in \N^m$:
   \begin{align*}
      \reg\left(V_{(-, n_2, \ldots, n_m)}\right) &\leqslant \alpha,\\
      \reg\left(V_{(n_1, -)}\right) &\leqslant \beta.
   \end{align*} 
   Then 
   \[ t_i(V) \leqslant \max\{-1, 2i + \alpha + \beta\} \quad \mbox{ for all }i\in \N. \]
\end{proposition}

\begin{proof}
   Let $\mathbf{n}=(n_1, n_2, \ldots, n_m) \in \N^m$. 
   
   Applying Theorem \ref{grothendieck} to $V$ as a module over $\FI\times \FI^{m-1}$, we have the spectral sequence
   \[  ^{II}\!E^2_{pq} = \HH^\hor_p (\HH^\ver_q (V)) \Rightarrow \HH^{\FI^m}_{p+q}(V).  \]
   By Lemma \ref{homology_and_restrict}, we have   
   \begin{equation*} 
      (\HH_q^\ver(V))_{(n_1,-)} \cong \HH_q^{\FI^{m-1}}( V_{(n_1,-)} ).  
   \end{equation*} 
   Since 
   \[ \deg \HH_q^{\FI^{m-1}}( V_{(n_1,-)} ) \leqslant q + \beta. \]
   it follows that   
   \[ (\HH^\ver_q (V))_{\mathbf{n}} = 0 \quad \mbox{ if } \quad n_2+\cdots+n_m > q + \beta. \]
   Since $n_1$ is an arbitrary element of $\N$, we deduce that 
   \[ (\HH^\ver_q (V))_{(-, n_2, \ldots, n_m)} = 0 \quad \mbox{ if } \quad n_2+\cdots+n_m > q + \beta. \]
   By Lemma \ref{homology_and_restrict} again, we have
   \[ (\HH^\hor_p (\HH^\ver_q (V)))_{(-, n_2, \ldots, n_m)} \cong \HH_p^{\FI}( (\HH^\ver_q (V))_{(-, n_2, \ldots, n_m)} ), \]
   hence
   \[ (\HH^\hor_p (\HH^\ver_q (V)))_{\mathbf{n}} = 0  \quad\mbox{ if }\quad n_2+\cdots+n_m > q + \beta.  \]
   It follows from the spectral sequence that 
   \[ (\HH^{\FI^m}_i (V))_{\mathbf{n}} = 0  \quad\mbox{ if }\quad n_2+\cdots+n_m > i + \beta. \]

   Similarly, using the spectral sequence 
   \[  ^{I}\!E^2_{pq} = \HH^\ver_p (\HH^\hor_q (V)) \Rightarrow \HH^{\FI^m}_{p+q}(V), \]
   we deduce that
   \[ (\HH^{\FI^m}_i (V))_{\mathbf{n}} = 0  \quad\mbox{ if }\quad n_1 > i + \alpha \]
   Therefore 
   \[ (\HH^{\FI^m}_i (V))_{\mathbf{n}} = 0 \quad\mbox{ if }\quad |\mathbf{n}| > 2i + \alpha + \beta. \]
\end{proof}

The next result is essentially due to a referee of this article.

\begin{proposition} \label{prop:2}
   Let $m, \alpha, \gamma \in \mathbb{Z}$ with $m\geqslant 2$ and $\alpha, \gamma \geqslant -1$. Let $V$ be an $\FI^m$-module such that for all $x\in \N$:
   \begin{align*}
      x + \reg\left(V_{(x, -)}\right) &\leqslant \gamma \quad \mbox{ if } x\leqslant \alpha;\\
      V_{(x, -)} &= 0 \quad \mbox{ if } x > \alpha. 
   \end{align*}
   Then 
   \[ \reg(V) \leqslant \gamma. \]
\end{proposition}

\begin{proof}
   Applying Theorem \ref{grothendieck} to $V$ as a module over $\FI\times \FI^{m-1}$, we have the spectral sequence
   \[  ^{II}\!E^2_{pq} = \HH^\hor_p (\HH^\ver_q (V)) \Rightarrow \HH^{\FI^m}_{p+q}(V).  \]
   Let $\mathbf{n}=(n_1, n_2, \ldots, n_m) \in \N^m$ with 
   \[ n_1 + n_2 + \cdots + n_m > p+q+\gamma. \]
   It suffices to prove that 
   \[ (\HH^\hor_p (\HH^\ver_q (V)))_{\mathbf{n}} = 0. \]
   To this end, we first prove the following claim.
   
   \medskip 
   
   {\bf Claim \ref{prop:2}.1.} We have:
   \[ \deg\left( (\HH^\ver_q(V))_{(-, n_2, \ldots, n_m)} \right) < \max\{0, n_1-p\}. \]
   
   \begin{proof}[Proof of Claim \ref{prop:2}.1]
      Let $x\in \N$ with $x\geqslant n_1-p$. We need to show that 
      \[ (\HH^\ver_q(V))_{(x, n_2, \ldots, n_m)} = 0. \] 
      By Lemma \ref{homology_and_restrict}, we have
      \[ (\HH^\ver_q(V))_{(x, n_2, \ldots, n_m)} \cong (\HH^{\FI^{m-1}}_q(V_{(x, -)}))_{(n_2, \ldots, n_m)}. \]
      There are two cases: (1) $x\leqslant \alpha$, (2) $x>\alpha$.

      \medskip

      {\bf Case 1: $x\leqslant \alpha$.}

      In this case we have:
      \begin{align*}
         n_2 + \cdots + n_m &> p+q+\gamma-n_1  \\
         &\geqslant p+q+x+  \reg\left(V_{(x, -)}\right) -n_1  \\
         &\geqslant p+q+(n_1-p) + \left(t_q\left(V_{(x,-)}\right) - q\right)-n_1 \\
         &\geqslant t_q\left(V_{(x,-)}\right).
      \end{align*}
      This implies that 
      \[ (\HH^{\FI^{m-1}}_q(V_{(x, -)}))_{(n_2, \ldots, n_m)} = 0. \]

      \medskip

      {\bf Case 2: $x>\alpha$.} 

      In this case we have $V_{(x, -)}=0$, hence 
      \[ (\HH^{\FI^{m-1}}_q(V_{(x, -)}))_{(n_2, \ldots, n_m)} = 0. \]

      \medskip 

      This completes the proof of Claim \ref{prop:2}.1.
   \end{proof}

     Now by Lemma \ref{homology_and_restrict}, we have
      \[  (\HH^\hor_p (\HH^\ver_q (V)))_{\mathbf{n}} \cong (\HH^{\FI}_p( (\HH^\ver_q (V) )_{(-, n_2, \ldots, n_m)}  ))_{n_1}. \]
   
      If $n_1 \leqslant p$, then Claim \ref{prop:2}.1 implies that 
      \[ (\HH^\ver_q(V))_{(-, n_2, \ldots, n_m)} = 0, \] 
      so 
      \[ (\HH^{\FI}_p( (\HH^\ver_q (V) )_{(-, n_2, \ldots, n_m)}  ))_{n_1} = 0. \]

      If $n_1 > p$, then Claim \ref{prop:2}.1 implies that 
      \[ n_1 > p + \deg\left( (\HH^\ver_q(V))_{(-, n_2, \ldots, n_m)} \right). \]
      Hence by Corollary \ref{cor:finite degree case}, we have 
      \[ (\HH^{\FI}_p((\HH^\ver_q(V))_{(-, n_2, \ldots, n_m)}))_{n_1} = 0. \] 
\end{proof}

\subsection{}

We collect here some simple facts on the functions $\rho_m$, $\rho'_m$, $\rho''_m$ (see Definition \ref{def:rho}).

\begin{lemma} \label{lem:rho increasing in d}
   Let $m,d,r\in \mathbb{Z}$ with $m\geqslant 1$ and $d, r\geqslant -1$. Let $x\in \N$ with $x\leqslant d+1$. Then 
   \[ \rho_m(d,r) \geqslant x + \rho_m(d-x,r). \]
\end{lemma}

\begin{proof}
   This is trivial if $d=-1$ so assume that $d\geqslant 0$. 

   It is plain that for any integer $c\geqslant 0$, we have:
   \[ \rho_m(c,r) \geqslant 1 + \rho_m(c-1,r). \] 
   The lemma now follows from taking $c=d, d-1, \ldots, d-x+1$. 
\end{proof}

\begin{corollary} \label{lem:rho at least d}
   Let $m,d,r\in \mathbb{Z}$ with $m\geqslant 1$ and $d, r\geqslant -1$. Then $\rho_m(d,r) \geqslant d$. 
\end{corollary}

\begin{proof}
   Take $x=d+1$ in Lemma \ref{lem:rho increasing in d}.
\end{proof}

\begin{lemma} \label{lem:compare rho' rho''}
   Let $m,d,r\in \mathbb{Z}$ with $m\geqslant 2$, $d\geqslant 0$, $r\geqslant -1$. Then 
   \[    \rho''_m(d,r) > \rho'_m(d,r).   \]
\end{lemma}

\begin{proof}
   Obviously 
   \[ 3+ \rho_m(d-1, r) > 2+ \rho_m(d-1, r). \]
   We also have:
\begin{align*}
   4+ \rho_1(d,r) + \rho_{m-1}(d,r) &\geqslant 4 + (d+r-1) + \rho_{m-1}(d,r) \\
   &\geqslant 4 -1 +r-1 -1 \\
   & > r.
\end{align*}
\end{proof}

\subsection{}

We can now prove our main result.

\begin{proof}[Proof of Theorem \ref{main}]
   For any integers $m\geqslant 1$ and $d\geqslant -1$, denote by $\mathfrak{T}(m, d)$ the following statement:
   \begin{equation*}
      \parbox{.85\textwidth}{\it For any integer $r\geqslant -1$, if $V$ is an $\FI^m$-module which is generated in degree $\leqslant d$ and related in degree $\leqslant r$, then $\reg(V)\leqslant \rho_m(d,r)$.}
   \end{equation*}
   We shall prove that $\mathfrak{T}(m,d)$ is true by nested induction. The outer induction is over $m$ and the inner induction is over $d$. 

   By \cite{ce}*{Theorem A}, we know that $\mathfrak{T}(1,d)$ is true for all $d\geqslant -1$. It is easy to see that $\mathfrak{T}(m,-1)$ is true for all $m\geqslant 1$. 

   Now fix $m\geqslant 2$ and $d\geqslant 0$. Assume that:
   \begin{itemize}
      \item $\mathfrak{T}(m-1, c)$ is true for all $c\geqslant -1$;
      \item $\mathfrak{T}(m, d-1)$ is true.
   \end{itemize}
   To prove that $\mathfrak{T}(m,d)$ is true, fix an integer $r\geqslant -1$ and an $\FI^m$-module $V$ which is generated in degree $\leqslant d$ and related in degree $\leqslant r$. We want to show that 
   \[ \reg(V) \leqslant \rho_m(d,r). \] 
   We do this in several steps. 
 
   \medskip 

   {\bf Step 1. Bound $t_2(V)$.}

   Let $(n_1, n_2, \ldots, n_m)\in \N^m$. By Lemma \ref{restriction of free modules}, we know that $V_{(-, n_2, \ldots, n_m)}$ and $V_{(n_1, -)}$ are generated in degree $\leqslant d$ and related in degree $\leqslant r$. From the base case $\mathfrak{T}(1,d)$ and assumption $\mathfrak{T}(m-1,d)$, it follows that 
   \begin{align*}
      \reg\left( V_{(-,n_2,\ldots, n_m)} \right) &\leqslant \rho_1 (d,r),\\
      \reg\left( V_{(n_1,-)}\right) &\leqslant \rho_{m-1} (d,r).
   \end{align*}
   Hence by Proposition \ref{prop:1}, we have 
   \[ t_i(V) \leqslant \max\{-1, 2i + \rho_1(d,r) + \rho_{m-1}(d,r)\} \quad \mbox{ for all } i\in \N. \]
   In particular, 
   \begin{equation} \label{eq:t2bound}
      t_2(V) \leqslant 4 + \rho_1(d,r) + \rho_{m-1}(d,r).
   \end{equation}

   \medskip 

   {\bf Step 2. Bound $t_0(\mathbf{K}V)$ and $t_1(\mathbf{K}V)$.}

   By Theorem \ref{church_les}, we have a long exact sequence 
   \begin{multline*} 
      \cdots \to \HH^{\FI^m}_3(\mathbf{D}V) \to  \HH^{\FI^m}_1(\mathbf{K}V) \to \mathbf{\Sigma}\HH^{\FI^m}_2(V) \\ 
      \to \HH^{\FI^m}_2(\mathbf{D}V) \to  \HH^{\FI^m}_0(\mathbf{K}V) \to \mathbf{\Sigma}\HH^{\FI^m}_1(V) \to \cdots
   \end{multline*}
   Thus 
   \begin{align*}
      t_0(\mathbf{K}V) &\leqslant \max\{ t_2(\mathbf{D}V) , \deg \mathbf{\Sigma}\HH^{\FI^m}_1(V) \}, \\
      t_1(\mathbf{K}V) &\leqslant \max\{ t_3(\mathbf{D}V) , \deg \mathbf{\Sigma}\HH^{\FI^m}_2(V) \}.
   \end{align*}

   By Lemma \ref{degD} and assumption $\mathfrak{T}(m, d-1)$, we have 
   \[ t_2(\mathbf{D}V) \leqslant 2+\rho_m(d-1, r). \]
   We also have 
   \begin{align*}
      \deg \mathbf{\Sigma}\HH^{\FI^m}_1(V) &\leqslant \deg \HH^{\FI^m}_1(V) \\ 
      &= t_1(V) \\ 
      &\leqslant r.
   \end{align*}
   We deduce that 
   \begin{equation} \label{eq:bound t0}
      t_0(\mathbf{K}V) \leqslant \rho'_m(d,r). 
   \end{equation}

   By Lemma \ref{degD} and assumption $\mathfrak{T}(m, d-1)$ again, we have 
   \[ t_3(\mathbf{D}V) \leqslant 3+\rho_m(d-1, r). \]
   We also have 
   \begin{align*}
      \deg \mathbf{\Sigma}\HH^{\FI^m}_2(V) &\leqslant \deg \HH^{\FI^m}_2(V) & \\
      &= t_2(V) & \\
      &\leqslant 4+ \rho_1(d,r) + \rho_{m-1}(d,r) & \mbox{(by \eqref{eq:t2bound})}.
   \end{align*}
   We deduce that 
   \begin{equation} \label{eq:bound t1}
      t_1(\mathbf{K}V) \leqslant \rho''_m(d,r).
   \end{equation}

   \medskip 

   {\bf Step 3. Bound $\reg(\mathbf{K}V)$.}

   By \eqref{eq:bound t0} and \eqref{eq:bound t1}, we have: 
   \begin{align*}
      t_0(K_1 V) &\leqslant \rho'_m(d,r),\\
      t_1(K_1 V) &\leqslant \rho''_m(d,r).
   \end{align*} 
   Take any $x\in \N$. Then by Lemma \ref{lem:restriction of K1V}, we have: 
   \begin{equation}\label{two inequalities t0 t1}
   \begin{aligned} 
      t_0((K_1V)_{(x,-)}) &\leqslant \max\{ -1, \rho'_m(d,r)-x \}, \\
      t_1((K_1V)_{(x,-)}) &\leqslant \max\{ -1, \rho''_m(d,r)-x \}. 
   \end{aligned} \end{equation}
   We now consider the following two cases: (1) $x\leqslant \rho'_m(d,r)$, (2) $x>\rho'_m(d,r)$. 
   
   \medskip 

   {\bf Case 1: $x\leqslant \rho'_m(d,r)$.}

   In this case, from \eqref{two inequalities t0 t1}, we have: 
   \begin{align*}
      t_0((K_1V)_{(x,-)}) &\leqslant \rho'_m(d,r)-x, \\
      t_1((K_1V)_{(x,-)}) &\leqslant \rho''_m(d,r).
   \end{align*}
   Using \eqref{eq:relation degree} and Lemma \ref{lem:compare rho' rho''}, we deduce that $(K_1V)_{(x,-)}$ is generated in degree $\leqslant \rho'_m(d,r)-x$ and related in degree $\leqslant \rho''_m(d,r)$. Hence by assumption $\mathfrak{T}(m-1,c)$ where $c=\rho'_m(d,r)-x$, we have 
   \[ \reg((K_1V)_{(x,-)}) \leqslant \rho_{m-1}(\rho'_m(d,r)-x, \rho''_m(d,r)). \]
   Using Lemma \ref{lem:rho increasing in d}, it follows that
   \[ \reg((K_1V)_{(x,-)}) \leqslant \rho_{m-1}(\rho'_m(d,r), \rho''_m(d,r)) - x. \]
   Thus 
   \[ x + \reg((K_1V)_{(x,-)}) \leqslant \rho_{m-1}(\rho'_m(d,r), \rho''_m(d,r)). \]

   \medskip

   {\bf Case 2: $x> \rho'_m(d,r)$.}

   In this case, it follows from \eqref{two inequalities t0 t1} that $(K_1V)_{(x,-)}=0$.

   \medskip

   From the conclusions of the two cases above, we can apply Proposition \ref{prop:2} to $K_1V$ and deduce that 
   \[ \reg(K_1V) \leqslant \rho_{m-1}(\rho'_m(d,r), \rho''_m(d,r)). \]
   Similarly, for each $s\in [m]$, we have
   \[ \reg(K_sV) \leqslant \rho_{m-1}(\rho'_m(d,r), \rho''_m(d,r)). \]
   Hence,
   \begin{equation} \label{eq:bound reg of KV}
      \reg(\mathbf{K}V) \leqslant \rho_{m-1}(\rho'_m(d,r), \rho''_m(d,r)). 
   \end{equation}

   \medskip 

   {\bf Step 4. Bound $\reg(V)$.}

   Take any $i\in \N$. Recall from Theorem \ref{church_les} that we have a long exact sequence 
   \begin{equation} \label{les in pf}
      \cdots \to \HH^{\FI^m}_{i-1} (\mathbf{K}V) \to \mathbf{\Sigma} \HH^{\FI^m}_i (V) \to \HH^{\FI^m}_i (\mathbf{D}V) \to \cdots 
   \end{equation}
   We have
   \begin{align*}
      t_i(V) &= \deg \HH^{\FI^m}_i(V) & \\
      &\leqslant 1+\deg \mathbf{\Sigma} \HH^{\FI^m}_i(V) & \mbox{(by Lemma \ref{degS})} \\
      &\leqslant \max\{1+t_{i-1} (\mathbf{K}V) , 1+t_i(\mathbf{D}V)\} & \mbox{(by \eqref{les in pf}).}
   \end{align*}   
   By \eqref{eq:bound reg of KV}, 
   \[ 1+t_{i-1} (\mathbf{K}V) \leqslant i + \rho_{m-1}(\rho'_m(d,r), \rho''_m(d,r)).\]
   By Lemma \ref{degD} and assumption $\mathfrak{T}(m,d-1)$, we have
   \[ 1+t_i(\mathbf{D}V) \leqslant 1+i+\rho_m(d-1, r). \]
   It follows from above that
   \begin{align*} 
      t_i(V) &\leqslant \max\{ i + \rho_{m-1}(\rho'_m(d,r), \rho''_m(d,r)), 1+i+\rho_m(d-1, r)\} \\
      &= i + \rho_m(d,r).
   \end{align*}
   We conclude that $\reg(V) \leqslant \rho_m(d,r)$, as desired.
\end{proof}

\subsection{} \label{subsec:last}

We now prove Corollary \ref{main_corollary}.

\begin{proof}[Proof of Corollary \ref{main_corollary}]
   There are two cases: (1) $t_0(V)\leqslant t_1(V)$, (2) $t_0(V)>t_1(V)$.

   \medskip 

   {\bf Case 1: $t_0(V)\leqslant t_1(V)$.} 
   
   In this case, we know by \eqref{eq:relation degree} that $V$ is generated in degree $\leqslant t_0(V)$ and related in degree $\leqslant t_1(V)$. Hence by Theorem \ref{main}, we have 
   \[ \reg(V) \leqslant \rho_m(t_0(V), t_1(V)). \]

   \medskip

   {\bf Case 2: $t_0(V)>t_1(V)$.} 
   
   In this case, let 
   \[ A = \{ \mathbf{n} \in \N^m \mid |\mathbf{n}| \leqslant t_1(V) \}. \]  

   Let $U$ be the smallest $\FI^m$-submodule of $V$ such that $U_{\mathbf{n}} = V_{\mathbf{n}}$ for all $\mathbf{n}\in A$. It is easy to see that 
   \[ t_0(U)\leqslant t_1(V), \]
   so we also have 
   \begin{equation}  \label{eq:t0U less than t0V}
      t_0(U) < t_0(V). 
   \end{equation}

   Let $Q=V/U$. We have a short exact sequence 
   \[ 0 \to U \to V \to Q \to 0. \]
   Observe that
   \begin{gather*}
      t_0(Q) \leqslant t_0(V),\\
      t_1(Q) \leqslant \max\{ t_0(U), t_1(V) \} \leqslant t_1(V).
   \end{gather*}
   
   We have $Q_{\mathbf{n}} = 0$ for all $\mathbf{n}\in A$. Thus there exists an epimorphism $\phi: P\to Q$ where $P$ is a free $\FI^m$-module such that $P_{\mathbf{n}}=0$ for all $\mathbf{n}\in A$. 
   
   Let $W$ be the kernel of $\phi$. Then $W_{\mathbf{n}}=0$ for all $\mathbf{n}\in A$. We have a short exact sequence 
   \[ 0 \to W \to P \to Q \to 0. \] 
   Thus we have a monomorphism $\HH^{\FI^m}_1(Q) \to \HH^{\FI^m}_0(W)$. Hence:
   \begin{align*}
      & W_{\mathbf{n}}=0 \quad \mbox{ for all }\mathbf{n}\in A\\
      \Longrightarrow \quad & (\HH^{\FI^m}_0(W))_{\mathbf{n}}=0 \quad \mbox{ for all }\mathbf{n}\in A\\
      \Longrightarrow \quad & (\HH^{\FI^m}_1(Q))_{\mathbf{n}} = 0 \quad \mbox{ for all }\mathbf{n}\in A.
   \end{align*}  
    Since $t_1(Q)\leqslant t_1(V)$, we must have $\HH^{\FI^m}_1(Q)=0$. It follows from \cite{liyu}*{Theorem 1.3} that $\HH^{\FI^m}_i(Q)=0$ for all $i\geqslant 1$. 

    We deduce that 
    \[ \HH^{\FI^m}_i(U) \cong \HH^{\FI^m}_i(V) \quad  \mbox{ for all } i\geqslant 1. \] 
    In particular, 
    \[ t_i(U) = t_i(V) \quad \mbox{ for all } i\geqslant 1. \] 
    
   Since $t_0(U) \leqslant t_1(V)$, we have $t_0(U)\leqslant t_1(U)$. Thus by \eqref{eq:relation degree} we know that $U$ is generated in degree $\leqslant t_0(U)$ and related in degree $\leqslant t_1(U)$. Therefore:
   \begin{align*}
      \reg(U) &\leqslant \rho_m(t_0(U), t_1(U)) \quad \mbox{ (by Theorem \ref{main}) } \\
      &< \rho_m(t_0(V), t_1(U)) \quad \mbox{ (by \eqref{eq:t0U less than t0V} and Lemma \ref{lem:rho increasing in d}) }\\
      &= \rho_m(t_0(V), t_1(V)).
   \end{align*}
   Thus for all $i\geqslant 1$, we have:
   \begin{align*}
      t_i(V) - i &= t_i(U)-i \\
      &< \rho_m(t_0(V), t_1(V)).
   \end{align*}
   
   It remains to see that $t_0(V) \leqslant \rho_m(t_0(V), t_1(V))$, but this is immediate from Corollary \ref{lem:rho at least d}.
\end{proof}

\begin{bibdiv}
\begin{biblist}

\bib{casto}{article}{
      author={Casto, Kevin},
      title={Representation stability and arithmetic statistics of spaces of 0-cycles},
      eprint={https://arxiv.org/abs/1710.06850},
}

\bib{church}{article}{
   author={Church, Thomas},
   title={Bounding the homology of FI-modules},
   eprint={https://arxiv.org/abs/1612.07803},
}
   
\bib{ce}{article}{
   author={Church, Thomas},
   author={Ellenberg, Jordan S.},
   title={Homology of FI-modules},
   journal={Geom. Topol.},
   volume={21},
   date={2017},
   number={4},
   pages={2373--2418},
}

\bib{gadish}{article}{
   author={Gadish, Nir},
   title={Categories of FI type: a unified approach to generalizing
   representation stability and character polynomials},
   journal={J. Algebra},
   volume={480},
   date={2017},
   pages={450--486},
}

\bib{gan}{article}{
   author={Gan, Wee Liang},
   title={A long exact sequence for homology of FI-modules},
   journal={New York J. Math.},
   volume={22},
   date={2016},
   pages={1487--1502},
}
    
\bib{gl_product}{article}{
   author={Gan, Wee Liang},
   author={Li, Liping},
   title={Castelnuovo-Mumford regularity of representations of certain
   product categories},
   journal={J. Algebra},
   volume={555},
   date={2020},
   pages={246--264},
}
   
\bib{lira}{article}{
   author={Li, Liping},
   author={Ramos, Eric},
   title={Local cohomology and the multigraded regularity of $FI^m$-modules},
   journal={J. Commut. Algebra},
   volume={13},
   date={2021},
   number={2},
   pages={235--252},
}

\bib{liyu}{article}{
   author={Li, Liping},
   author={Yu, Nina},
   title={${\rm FI}^m$-modules over Noetherian rings},
   journal={J. Pure Appl. Algebra},
   volume={223},
   date={2019},
   number={8},
   pages={3436--3460},
}

\bib{luck}{book}{
   author={L\"{u}ck, Wolfgang},
   title={Transformation groups and algebraic $K$-theory},
   series={Lecture Notes in Mathematics},
   volume={1408},
   note={Mathematica Gottingensis},
   publisher={Springer-Verlag, Berlin},
   date={1989},
   pages={xii+443},
}

\bib{ramos}{article}{
   author={Ramos, Eric},
   title={Homological invariants of ${\rm FI}$-modules and ${\rm
   FI}_G$-modules},
   journal={J. Algebra},
   volume={502},
   date={2018},
   pages={163--195},
}

\bib{weibel}{book}{
   author={Weibel, Charles A.},
   title={An introduction to homological algebra},
   series={Cambridge Studies in Advanced Mathematics},
   volume={38},
   publisher={Cambridge University Press, Cambridge},
   date={1994},
   pages={xiv+450},
}

\bib{zeng1}{article}{
   author={Zeng, Duo},
   title={A classification of injective ${\rm FI}^m$-modules},
   journal={Comm. Algebra},
   volume={51},
   date={2023},
   number={10},
   pages={4244--4258},
}

\bib{zeng2}{article}{
   author={Zeng, Duo},
   title={Locally self-injective property of ${\rm FI}^m$},
   journal={Algebra Colloq.},
   volume={31},
   date={2024},
   number={1},
   pages={97--110},
}

\end{biblist}
\end{bibdiv}

\end{document}